\numberwithin{equation}{section}
\newtheorem{theorem}{Theorem}[section]
\newtheorem{corollary}[theorem]{Corollary}
\newtheorem{proposition}[theorem]{Proposition}
\theoremstyle{remark}
\newtheorem{remark}[theorem]{Remark}
\newtheorem{definition}[theorem]{Definition}
\newtheorem{example}[theorem]{Example}
\newcommand\B{\mathcal{B}}
\newcommand\F{\mathcal{F}}
\newcommand\K{\mathcal{K}}
\newcommand\M{\mathcal{M}}
\newcommand\N{\mathbb{N}}
\renewcommand\O{\mathcal{O}}
\newcommand\R{\mathbb{R}}
\newcommand\Ga{\Gamma}
\newcommand\ga{\gamma}
\newcommand\La{\Lambda}
\newcommand\la{\lambda}
\newcommand\1{1\!\!1}
\newcommand\Bbs{B_{\mathrm{bs}}(\Ga_0)}
\newcommand\BBs{B_{\mathrm{bs}}(\Ga_0^2)}
\newcommand\ls{\mathrm{ls}}
\newcommand\Bb{\B_\mathrm{c}({X})}
\newcommand\Ob{\O_\mathrm{c}({X})}
\newcommand\Bbg{\B_\mathrm{b}(\Ga_0)}
\newcommand\BBg{\B_\mathrm{b}(\Ga_0^2)}
\newcommand\KK{\mathrm{K}}
\newcommand\Mlf{\M_\mathrm{lf}(\Ga_0)}
\newcommand\MLf{\M_\mathrm{lf}(\Ga_0^2)}
\newcommand\Mfm{\M_\mathrm{fm}^1(\Ga)}
\newcommand\MFm{\M_\mathrm{fm}^1(\Ga^2)}
\newcommand\Fcyl{\F_\mathrm{cyl}(\Ga)}
\newcommand\FCyl{\F_\mathrm{cyl}(\Ga^2)}
\newcommand\Mpi{\M_{\mathrm{fm},\pi}^1(\Ga)}
\newcommand\MPi{\M_{\mathrm{fm},\pi}^1(\Ga^2)}
\newcommand\Mp{\M^1(\Ga)}
\newcommand\cnt[2]{\text{\setbox2=\hbox{#1}\rlap{\hbox to \wd2{\hfil#2\hfil}}\box2}}
\newcommand\Star{\mathbin{\cnt{$\bigcirc$}{$\star$}}}
\newcommand\eps{\varepsilon}
\DeclareMathOperator*{\esssup}{ess\,sup}
\author{Dmitri Finkelshtein\thanks{Institute of Mathematics,
         National Academy of Sciences of Ukraine,
         01601 Kiev-4, Ukraine, e-mail:fdl@imath.kiev.ua}}
\title{Towards on convolutions on configuration spaces. II.~Spaces of locally finite configurations}
\begin{document}

\maketitle
\begin{abstract}
In the second part of the paper we consider a convolution of probability measures on spaces of locally finite configurations (subsets of a phase space) as well as their connection with the convolution of the corresponding correlation measures and functionals. In particular, the convolution of Gibbs measures is studied. We describe also a connection between invariant measures with respect to some operator and properties of the corresponding image of this operator on correlation functions.
\end{abstract}

{\bf Keywords}: Configuration spaces, convolutions of measures, harmonic analysis, Gibbs measures, contact process

{\bf MSC (2010)}: 82C22, 42A85, 42A82, 60K35

\section{Introduction}

The present paper is the second part of the publication devoted to convolutions on spaces of configuration in continuum. The first part \cite{Fin2012a} concerned convolutions over spaces of finite configurations. More precisely, let $X$ be a connected oriented non-compact Riemannian $C^\infty$-manifold, $\mathcal{O}({X})$ be the class of all open subsets from ${X}$, $\B({X})$ be the corresponding Borel $\sigma$-algebra. We denote the classes of all open and Borel subsets from ${X}$ which have compact supports by $\Ob$ and~$\Bb$, correspondingly. Let $m$ be a non-atomic Radon measure on ${X}$, i.e., $m(\La)<\infty$, $\La\in\Bb$ and $m(\{x\})=0$, $x\in{X}$. Suppose also that there exists a sequence $\{\La_n\}_{n\in\N}\subset\Bb$ such that $\La_n\subset\La_{n+1}$, $n\in\N$ and $\bigcup_{n\in\N}\La_n=X$. The space of finite configurations over $X$ is the set
\begin{equation}\label{Ga0}
    \Ga_{0}:=\bigsqcup_{n\in \N_0}\Ga^{(n)},
\end{equation}
where $\Ga^{(n)}\simeq \widetilde{X^n}/S_n$,  $\widetilde{X^n} = \bigl\{ (x_1,\ldots ,x_n)\in X^n\bigm| x_k\neq x_l, \text{ if } k\neq l\bigr\}$, $S_n$ is a group of permutations of the set $\{1,\ldots,n\}$, and symbol ``$\sqcup$'' means a disjoint union. The more detailed description of these and sequel notations see in \cite{Fin2012a} as well as a short review of investigations in configuration spaces theory. In particular, on the space $\Ga_0$ there are naturally defined topological and measurable structures which are generated by the corresponding structures of the space $X$, in particular, the Borel $\sigma$-algebra $\B(\Ga_0)$ might be considered. The basic measure on $\bigl(\Ga_0, \B(\Ga_0)\bigr)$ is the so-called Lebesgue--Poison measure
\begin{equation}\label{laz}
    \la_z=\sum_{n=0}^\infty \frac{z^n}{n!}m^{(n)}.
\end{equation}
Here $z>0$ and the measure $m^{(n)}$ on $\Ga^{(n)}$ is generated by the measure $m^{\otimes n}$ on $X^n$. The measure $\la_z$ belongs to the space $\Mlf$ of all locally finite measures on $\Ga_0$, that means that $\la_z(B)<\infty$, for any measurable bounded domain $B$ (in symbol, $B\in\Bbg$), i.e., $B$ is such that there exist $\La\in\Bb$ and~$N\in\N$ with $B\subset \bigsqcup_{n=0}^N\Ga_\La^{(n)}$. Set $\la:=\la_1$.

For measurable functions $G_1,G_2$ on $\Ga_0$ (in symbol, $G_1,G_2\in L^0(\Ga_0)$), properties of the two following convolutions were considered in \cite{Fin2012a}:
 \begin{align}\label{ast-q}
 (G_1 * G_2)(\eta)&:=\sum_{\xi_1\sqcup\xi_2=\eta}
 G_1(\xi_1)\,G_2(\xi_2)
\\ \label{star-q}
 (G_1\star G_2)(\eta)&:=
 \sum_{\xi_1\cup\xi_2=\eta}G_1(\xi_1)\,G_2(\xi_2).
\end{align}
There was considered also a convolution of measures on $\bigl(\Ga_0, \B(\Ga_0)\bigr)$. Namely, let $\rho_1, \rho_2$ be measures on
$\bigl(\Ga_0,\B(\Ga_0)\bigr)$, then the measure $\rho:=\rho_1\ast\rho_2$ on
$\bigl(\Ga_0,\B(\Ga_0)\bigr)$ is said to be the convolution of $\rho_1$ and $\rho_2$ if, for any measurable $G:\Ga_0\to \R$,
\begin{equation}
\int_{\Ga_0} G(\eta)d\rho(\eta)
=\int_{\Ga_0} \int_{\Ga_0}G(\eta_1\cup\eta_2)
\,d\rho_1(\eta_1)\,d\rho_2(\eta_2),\label{convmeasfin}
\end{equation}
if only the right hand side is finite. There was shown in \cite{Fin2012a} also that the existence of the Radon--Nikodym derivatives $k_i=\dfrac{d\rho_i}{d\la}$, $i=1,2$ implies the existence of the such derivative $k$ for $\rho=\rho_1\ast\rho_2$ and, moreover, $k=\dfrac{d\rho}{d\la}=k_1\ast k_2$ in the sense of \eqref{ast-q}. It should be fixed also the following important property: for any $H,G_1,G_2\in L^0(\Ga_0)$ one has
\begin{equation}\label{minlosid-ast}
\int_{\Ga_0}H(\eta)(G_1\ast G_2)(\eta)d\la(\eta) =
\int_{\Ga_0}\int_{\Ga_{0}}H(\eta\cup\xi)G_1(\eta)G_2(\xi)d\la(\xi)d\la(\eta),
\end{equation}
if only at least one of integrals is well-defined (see. e.g. \cite{Kun1999}).

The space $\Ga$ of locally finite configurations and the corresponding main structures are considered in Section~2 of this paper. In~Section~3, we consider elements of the so-called harmonic analysis on configuration spaces which we will need in the sequel. In particular, this analysis is connected with properties of the convolution \eqref{star-q}. In~Section~4, we study spaces of locally finite configurations with two different types of points. This allows us to consider in~Section~5 convolutions of probability measures on spaces of locally finite configurations and their relations to the convolutions on spaces of finite configurations mentioned above. We discover also there the question about the convolution of Gibbs measures. In~Section~6, we construct a number of examples of operators which are connected with the notion of the derivation operator with respect to (w.r.t. in the sequel) the convolution \eqref{ast-q}; the latter operators were considered in \cite{Fin2012a}.

Author would like to thank Prof. Dr. Yuri Kondratiev for useful discussions. The paper was partially supported by The Ukraine President Scholarship and Grant for young scientists.

\section{Spaces of locally finite configurations}
\begin{definition}\label{def:Ga}
We consider the space of configurations $\Ga$ over ${X}$ as the set of all locally finite subsets from~${X}$, i.e.,
\begin{equation}
\Ga:=\bigl\{ \ga\subset{X}\bigm| |\ga\cap\La|<\infty \text{ for all
} \La\in\Bb\bigr\}.
\end{equation}
\end{definition}
Definition \ref{def:Ga} implies that a locally finite subset is at most countable subset of~${X}$ without finite occupation points. It is clear that $\Ga_0$ is a subset of $\Ga$, however, the space $\Ga_0$ has a special meaning to the sequel and it is considered independently. Let $\Ga_\La$ be the set of all configurations $\ga\in\Ga$ which are subsets of~$\La\in\Bb$. By Definition~\ref{def:Ga}, all the such configurations are finite. Hence, as a set, $\Ga_\La$ coincides with~$\Ga_{0,\La}$ (see a definition in~\cite{Fin2012a}).

Let $C_0({X})$ denote the class of all real-valued continuous functions on~${X}$ with compact supports. For any $f\in C_0({X})$, we define a linear function on~$\Ga$ by  $\langle f,\ga \rangle := \sum_{x\in\ga}f(x)$. It is worth noting that the sum is taken over finite set of points from~$\ga$ only whose are inside in the bounded in ${X}$ support of the function $f$. The weakest topology such that all linear functions
$\Ga\ni\ga\mapsto\langle f,\ga\rangle\in\R$, $f\in C_0({X})$ are continuous is said to be the vague topology $\O(\Ga)$ on the space $\Ga$. Note also that any configuration $\ga\in\Ga$ may be identified with the measure $\ga(\cdot):\B({X})\to \R_+$
 on~${X}$ which are a linear combination of Dirac measures, namely, $\ga\leftrightarrow\sum_{x\in\ga}\eps_x$.
 By Definition~\ref{def:Ga}, the measure $\ga$ is a Radon measure on~$\B({X})$, namely,
 \[
 \ga(A)=\int_A\,d\ga(x)=\sum_{x\in\ga}\int_A\,d\eps_x(y)=\lvert\ga\cap
 A\rvert<\infty, \qquad A\in\Bb.
 \]
Therefore, the space of configurations $\Ga$ might be isomorphically embedded into the space $\M({X})$ of Radon measures on~${X}$. Then, the vague topology $\O(\Ga)$ might be induced by the vague topology on $\M({X})$, the latter is defined in e.g.~\cite[Section 7.3]{Fol1999}.

Set $\ga_\La:=\ga\cap\La$, for any $\La\in\Bb$, $\ga\in\Ga$.
The base of topology $\O(\Ga)$ is given by
$\bigl\{\ga\in\Ga\bigm||\ga_\La|=n, \ga_{\partial\La}=\emptyset
 \bigr\}$,
where $\La\in\Bb$, $n\in\N_0$, and~$\partial\La$ is the boundary to $\La$, see e.g. \cite{Len1975}. This topology is separable and metrizable, see e.g. \cite{MKM1978}, moreover, the corresponding metric space will be complete. Note that the topology $\O(\Ga_\La)$ induced by the topology $\O(\Ga)$ does not coincide with the topology $\O(\Ga_{0,\La})$. However, the corresponding Borel $\sigma$-algebras $\B(\Ga_\La)$ and
$\B(\Ga_{0,\La})$ are coincided (see \cite{KK2002} for more details).

The Borel $\sigma $-algebra which corresponds to~$\O(\Ga)$ we denote by $\B(\Ga)$. For any $\La\in\Bb$, we consider a mapping $N_\La:\Ga\to \N_0$ in the following way: $N_\La(\ga)=|\ga_\La|$. Next, for any $\La\in\Bb$, we define a mapping $p_\La:\Ga\to \Ga_\La$, given by $p_\La(\ga):=\ga\cap\La$.
Then, $\B(\Ga)$ is the smallest $\sigma $-algebra such that all mappings $N_\La$, $\La\in\Bb$ are measurable, see e.g. \cite{AKR1998a}, i.e.,
$\B(\Ga)=\sigma\bigl( N_\La \bigm| \La\in\Bb\bigr)$.
We consider also the family of $\sigma$-algebras
$\B_\La(\Ga):=\sigma(N_{\La'}\mid\La'\in \Bb, \La'\subset\La)$, for $\La\in\Bb$.
It is worth noting that the $\sigma $-algebras $\B_\La(\Ga)$ and~$\B(\Ga_\La)$
 are~$\sigma$-isomorphic, see e.g. \cite{KK2002}, that means that there exists a bijection between them which preserves set operations, including countable unions.

Let $\mu$ be a probability measure on~$\bigl(\Ga,\B(\Ga)\bigr)$, $\Mp$ denote the class of all such measures. Let $\La\in\Bb$. The projection of $\mu^\La$ of the measure $\mu$ onto the measurable space $\bigl(\Ga_\La,\B(\Ga_\La)\bigr)$ is an image of the measure $\mu$ under the mapping $p_\La$, i.e., $\mu^\La(A):=\mu\bigl(p_\La^{-1}(A)\bigr)$, $A\in\B(\Ga_\La)$.
A probability measure $\mu$ on~$\bigl( \Ga, \B(\Ga) \bigr)$ is said to have finite local
moments of all order if $ \int_\Ga \lvert\ga_\La\rvert^n\,d\mu(\ga)<+\infty$ for any
$\La\in\Bb$ and~$n\in\N_0$. Let $\Mfm$ denote the class of all such measures.

An example of a measure with local finite moments is the Poisson measure. To define it let us fix $z>0$ and, for any $\La\in\Bb$, we consider the Lebesgue--Poisson measure $\la_z$ on $\Ga_\La=\Ga_{0,\La}$. By \eqref{laz}, $\la_z(\Ga_\La)=e^{z m(\La)}$.
We define a probability measure on~$\bigl( \Ga_\La,
\B(\Ga_\La)\bigr)$, given by $\pi_z^\La:=e^{-z m(\La)}\la_z$. For any $\La_1,\La_2\in\Bb$, $\La_1\subset\La_2$, let $p_{\La_2,\La_1}:\Ga_{\La_2}\to \Ga_{\La_1}$ be defined by
$p_{\La_2,\La_1}(\eta)=\eta_{\La_1}$, $\eta\in\La_2$.
It is easily seen that
$ \pi^{\La_2}_z\bigl(p_{\La_2,\La_1}^{-1}(A)\bigr)=\pi_z^{\La_1}(A)$, $A\in\B(\Ga_{\La_1})$,
i.e., the family of measures $\bigl\{\pi_z^\La \bigm| \La\in\Bb\bigr\}$ is consistent.
Then, by the Kolmogorov theorem version, see e.g. \cite[Theorem V.3.2]{Par1967} or \cite[Theorem 5.12]{KKdS1998}, one get that there exists a unique measure $\pi_z$ on~$\bigl(\Ga ,\B(\Ga )\bigr)$ such that, for any $\La\in\Bb$, the measure $\pi^\La_z$ is the projection of $\pi_z$ on~$\bigl(\Ga_\La,\B(\Ga_\La)\bigr)$. The measure $\pi_z$ is said to be the Poisson measure with intensity (or, parameter) $z>0$ on the space of configurations $\Ga$.

An important property of the Poisson measure is the so-called Mecke identity, which was actually proved for the Poisson processes by N.~Campbell, see~\cite{Cam1909,Cam1910}. This identity states that for any $\B(\Ga)\times\B({X})$-measurable function $h:\Ga\times{X}\to \R$
\begin{equation}\label{MeckeID}
 \int_\Ga\sum_{x\in\ga}h(\ga,x)\,dm(x)\,d\pi_z(\ga)
 =z\int_\Ga\int_{X} h(\ga\cup x,x) \, dm(x)\, d\pi_z(\ga),
\end{equation}
if only at least one of integrals is well-defined. In~\cite{Mec1968}, J.~Mecke shown that the identity \eqref{MeckeID} is a characterization to that $\pi_z$ is the Poisson emasure with the intensity $z>0$.

A measure $\mu\in\Mfm$ is said to be locally absolutely continuous w.r.t. the Poisson measure $\pi_z$, $z>0$, if, for any $\La\in\Bb$, the projection $\mu^\La$
of the measure $\mu$ on~$\bigl(\Ga_\La,\B(\Ga_\La)\bigr)$ is absolutely continuous w.r.t. the measure $\pi_z^\La$. Clearly, if $\mu\in\Mfm$ is locally absolutely continuous w.r.t. the Poisson measure $\pi_{z_0}$, for some $z_0>0$, then it is locally absolutely continuous w.r.t. the Poisson measure $\pi_z$, for any $z>0$. Let $\Mpi$ denote the class of all such measures.

Measures which are locally absolutely continuous w.r.t. the Poisson measure have properties which are similar to the properties of the Lebesgue--Poisson measure, considered in \cite{Fin2012a}. Namely, let $\mu\in\Mpi$, then for any $A\in\B({X})$ with $m(A)=0$ the following equality holds true (see e.g. \cite{Kun1999}):
 \[
 \mu\bigl(\{\ga\in\Ga \mid \ga\cap
 A\neq\emptyset\}\bigr)=0.
 \]
As a corollary, for any $\ga'\in\Ga$, $x\in{X}$,
 \[
 \mu\bigl(\{\ga\in\Ga \mid x\in\ga\}\bigr)
 =\mu\bigl(\{\ga\in\Ga \mid \ga'\cap\ga\neq\emptyset\}\bigr)=0.
 \]
Next, the set of pairs
 $\bigl\{(\ga,\ga')\in\Ga\times\Ga\mid \ga\cap\ga'\neq\emptyset\bigr\}$
is a zero set for the measure $\mu\otimes\mu$, as well as the set of pairs
 $\bigl\{(\ga,x)\in\Ga\times{X}\mid x\notin\ga\bigr\}$
is a zero set for the measure $\mu\otimes m$. Finally, it can be easily shown that that~$\Ga_0\in\B(\Ga)$ and $\mu(\Ga_0)=0$.

\begin{remark}\label{remPi}
By the definition of the Lebesgue--Poisson measure, one has that, for $z_1\neq z_2$, the measures $\la_{z_1}$ and~$\la_{z_2}$ on~$\bigl(\Ga_0,\B(\Ga_0)\bigr)$ are equivalent (i.e., they are mutually absolutely continuous). However, two measures on $\bigl(\Ga,\B(\Ga)\bigr)$ which are locally absolutely continuous w.r.t. the same Poisson measure may not be mutually absolutely continuous. Even two Poisson measures $\pi_{z_1}$
and $\pi_{z_2}$ (they are mutually locally absolutely continuous), for $z_1\neq z_2$ are orthogonal on the whole space $\Ga$ (see \cite{Sko1957} and generalization in
\cite{Tak1990}).
\end{remark}

A function $F:\Ga\to \R$ is said to be cylindric if there exists $\La\in\Bb$ such that $F$ is a $\B_\La(\Ga)$-measurable function. This property may be characterized by the equality
$F(\ga)=F\!\upharpoonright\!_{\Ga_\La}(\ga_\La)$. Let $\Fcyl$ denote the class of all cylindric functions on~$\Ga$.

\section{Elements of harmonic analysis}
Recall that (see e.g. \cite{Fin2012a} for more details) a function $G\in L^0(\Ga_0)$ has a local support if there exists $\La\in\Bb$ such that $G\upharpoonright_{\Ga_0\setminus \Ga_{0,\La}}=0$. Let $L_\ls^0(\Ga_0)$ denote the class of all measurable functions on~$\Ga_0$ with local supports. By analogy, a $G\in L^0(\Ga_0)$ has a bounded support if there exists $B\in\Bbg$ with $G\upharpoonright_{\Ga_0\setminus B}=0$. Let $\Bbs$ denote the class of all bounded measurable functions on~$\Ga_0$ with bounded supports.

Let us consider the mapping $K:L_\ls^0(\Ga_0)\to \Fcyl$ given by (see \cite{KK2002,Len1975a,Len1975})
 \begin{equation}\label{defK}
 (KG)(\ga ):=\sum_{\eta\Subset\ga}G(\eta), \qquad \ga\in\Ga,
 \end{equation}
where~$G\in L_\ls^0(\Ga_0)$. Here and subsequently, for an infinite configuration $\ga\in\Ga$, the notation $\eta\Subset\ga$ means that $\eta\subset\ga$ and $\eta\in\Ga_0$, i.e., $\eta$ is a finite subset of a set $\ga$.
It is worth noting that the summation in \eqref{defK} is taken over finite collection of all subsets from~$\ga_\La$ where $\La$ is the local support of the function $G\in L_\ls^0(\Ga_0)$.

The mapping $K:L_\ls^0(\Ga_0)\to \Fcyl$ is linear, positive preserving and has an inverse (see e.g. \cite[Proposition~3.5]{KK2002})
\begin{equation}
(K^{-1}F)(\eta):=\sum_{\xi\subset\eta}(-1)^{|\eta\setminus\xi
|}F(\xi),\qquad\eta\in\Ga_0.\label{k-1trans}
\end{equation}
In the same reference it was shown that $F\in\Fcyl$ yields $K^{-1}F\in L_\ls^0(\Ga_0)$. It should be underline, that the right hand side of \eqref{k-1trans} gives a well-defined measurable function on~$\Ga_0$ for any measurable function $F$, if only the latter is well defined on whole~$\Ga$, or even on some subspace from ~$\Ga$, which contains $\Ga_0$.
\begin{proposition}[\!\!{\cite[Proposition~3.5]{KK2002}}]\label{pb}
Let $G\in\Bbs$. Then $KG\in\Fcyl$, moreover, there exist $C>0$, $\La\in\Bb$, and~$N\in\N_0$ such that, for $F=KG$,
 \begin{equation}\label{polbdd}
 \lvert F(\ga)\rvert\leq C\bigl(1+|\ga_\La|\bigr)^N,
 \qquad \ga\in\Ga.
 \end{equation}
\end{proposition}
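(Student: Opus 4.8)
The plan is to extract from the hypothesis $G\in\Bbs$ its two defining features and turn them into a truncation of the sum \eqref{defK} together with a uniform size bound. Since $G$ has bounded support, there is $B\in\Bbg$ with $G\upharpoonright_{\Ga_0\setminus B}=0$, and by the definition of $\Bbg$ we may fix $\La\in\Bb$ and $N\in\N_0$ with $B\subset\bigsqcup_{n=0}^{N}\Ga_\La^{(n)}$. Hence $G(\eta)=0$ whenever $\eta\not\subset\La$ or $|\eta|>N$. In particular $G$ has local support $\La$, so $G\in L_\ls^0(\Ga_0)$ and the already established mapping property $K:L_\ls^0(\Ga_0)\to\Fcyl$ immediately gives $KG\in\Fcyl$. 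Concretely, the only surviving terms in $(KG)(\ga)=\sum_{\eta\Subset\ga}G(\eta)$ are those with $\eta\subset\ga_\La$ and $|\eta|\le N$, so the value $F(\ga):=(KG)(\ga)$ depends on $\ga$ only through $\ga_\La$; this is exactly the cylindricity characterization $F(\ga)=F\upharpoonright_{\Ga_\La}(\ga_\La)$.

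For the quantitative estimate I would first use boundedness of $G$ to pick $C_0>0$ with $|G(\eta)|\le C_0$ for all $\eta\in\Ga_0$. Writing $n:=|\ga_\La|$ and applying the triangle inequality to the truncated sum obtained above, I get
\begin{equation*}
|F(\ga)|\le\sum_{\substack{\eta\subset\ga_\La\\|\eta|\le N}}|G(\eta)|\le C_0\,\#\bigl\{\eta\subset\ga_\La\bigm||\eta|\le N\bigr\}=C_0\sum_{k=0}^{N}\binom{n}{k}.
\end{equation*}
It then remains only to absorb the binomial sum into a polynomial of degree $N$ in $1+n$. Since $\binom{n}{k}\le n^{k}\le(1+n)^{N}$ for every $0\le k\le N$, the sum is bounded by $(N+1)(1+n)^{N}$, and the asserted inequality \eqref{polbdd} follows with $C:=C_0(N+1)$ and the same $\La$ and $N$.

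I do not expect a genuine obstacle: once the bounded-support condition is used to restrict the sum to subsets of $\ga_\La$ of cardinality at most $N$, everything reduces to the elementary counting bound $\sum_{k=0}^{N}\binom{n}{k}\le(N+1)(1+n)^{N}$. The only point deserving a little care is to notice that the pair $\La,N$ read off from the bounded support of $G$ serves \emph{simultaneously} for the cylindricity claim and for the polynomial estimate, which is automatic in the argument above.
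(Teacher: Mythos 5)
Your proof is correct; the paper gives no argument for this proposition (it is quoted verbatim from \cite[Proposition~3.5]{KK2002}), and your truncation of the sum \eqref{defK} to subsets $\eta\subset\ga_\La$ with $|\eta|\le N$ followed by the counting bound $\sum_{k=0}^{N}\binom{n}{k}\le(N+1)(1+n)^{N}$ is exactly the standard argument for it. The only cosmetic remark is that the paper's definition of a bounded set takes $N\in\N$ rather than $\N_0$, which changes nothing in your reasoning.
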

Let $\F_{\mathrm{pb}}(\Ga)$ be the class of all cylindric functions $F:\Ga\to\R$ which satisfy the inequality \eqref{polbdd}.

The latter functions is said to be (locally) polynomially bounded on~$\Ga$. As a result, $\F_{\mathrm{pb}}(\Ga)\subset L^1(\Ga,\mu)$, for any $\mu\in\Mfm$. This yields the correctness of the following definition (see \cite[Remark~4.7]{KK2002} for more details).
\begin{definition}
 Let $\mu\in\Mfm$. A measure $\rho_\mu\in\Mlf$ given by
 \begin{equation}\label{cormeas}
 \rho_\mu(A):=\int_\Ga(K\1_A)(\ga)\,d\mu(\ga), \qquad
 A\in\Bbg
 \end{equation}
 is said to be the correlation measure which corresponds to $\mu$. Here
 $\1_A:\Ga_0\to \R$ is the indicator function of a set $A\in\B(\Ga_0)$.
Note that $\rho_\mu(\{\emptyset\})=1$.
\end{definition}

\begin{proposition}[\!\!{\cite[Corollary 4.6]{KK2002}}]
Let $\mu\in\Mfm$. Then, for all $G\in \Bbs$, $G\in L^1(\Ga_0,\rho_\mu)$ and, moreover, \begin{equation}\label{GintEqKGint}
\int_{\Ga_0}G(\eta)\,d\rho_\mu(\eta )=\int_\Ga(KG)(\ga)\,d\mu(\ga).
\end{equation}
\end{proposition}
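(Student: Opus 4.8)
The plan is to verify the identity \eqref{GintEqKGint} first on indicator functions, where it reduces to the definition \eqref{cormeas} of $\rho_\mu$, then to extend it by linearity to simple functions, and finally to pass to an arbitrary $G\in\Bbs$ by a uniform approximation argument. The integrability of $G$ with respect to $\rho_\mu$ will fall out of the same estimates, so I would establish it first.

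For the integrability claim, note that since $G\in\Bbs$ there exist a constant $C>0$ and a set $B\in\Bbg$ such that $|G|\le C\1_B$. As $\1_B\in\Bbs$, Proposition~\ref{pb} gives $K\1_B\in\F_{\mathrm{pb}}(\Ga)\subset L^1(\Ga,\mu)$, and since $K\1_B\ge0$ the definition \eqref{cormeas} yields
\[
\int_{\Ga_0}\lvert G(\eta)\rvert\,d\rho_\mu(\eta)\le C\,\rho_\mu(B)=C\int_\Ga(K\1_B)(\ga)\,d\mu(\ga)<\infty,
\]
whence simultaneously $\rho_\mu(B)<\infty$ and $G\in L^1(\Ga_0,\rho_\mu)$.

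For the identity itself, observe that when $G=\1_A$ with $A\in\Bbg$ it is exactly \eqref{cormeas} read as $\int_{\Ga_0}\1_A\,d\rho_\mu=\rho_\mu(A)=\int_\Ga(K\1_A)\,d\mu$. By linearity of $K$ and of both integrals it then holds for every simple function $\sum_i c_i\1_{A_i}$ with $A_i\in\Bbg$. Given a general $G\in\Bbs$ with $|G|\le C\1_B$, I would choose simple functions $G_n$, each vanishing off $B$, with $\lVert G_n-G\rVert_\infty=:\eps_n\to0$; the identity holds for each such $G_n$ by the previous step. On the left one has $\bigl\lvert\int_{\Ga_0}(G_n-G)\,d\rho_\mu\bigr\rvert\le\eps_n\,\rho_\mu(B)\to0$. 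On the right, since $K$ is linear and positivity preserving, the definition \eqref{defK} gives the pointwise domination $\lvert KG_n-KG\rvert=\lvert K(G_n-G)\rvert\le K\lvert G_n-G\rvert\le\eps_n\,K\1_B$, so that $\int_\Ga\lvert KG_n-KG\rvert\,d\mu\le\eps_n\,\rho_\mu(B)\to0$. Passing to the limit on both sides of the identity for $G_n$ then produces \eqref{GintEqKGint}.

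The only genuine content, and the step I expect to be the main obstacle, is the uniform control of the right-hand side under approximation; this is precisely where the polynomial bound of Proposition~\ref{pb} enters, via the finiteness $\rho_\mu(B)<\infty$ together with the domination $\lvert K(G_n-G)\rvert\le\eps_n\,K\1_B$. Everything else is linearity and dominated convergence. It is worth emphasizing that the finite-local-moments hypothesis $\mu\in\Mfm$ is used exactly to secure $\F_{\mathrm{pb}}(\Ga)\subset L^1(\Ga,\mu)$, without which $\rho_\mu$ would not even be guaranteed locally finite.
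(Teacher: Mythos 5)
Your argument is correct. Note, however, that the paper itself offers no proof of this statement: it is quoted verbatim from \cite[Corollary 4.6]{KK2002}, so there is no internal proof to compare against. Your derivation --- integrability via the domination $|G|\le C\1_B$ together with $K\1_B\in\F_{\mathrm{pb}}(\Ga)\subset L^1(\Ga,\mu)$, the identity on indicators straight from \eqref{cormeas}, linearity for simple functions, and the passage to general $G\in\Bbs$ through the pointwise bound $|K(G_n-G)|\le K|G_n-G|\le\eps_n K\1_B$ --- is the natural route from the paper's definition of $\rho_\mu$ and is essentially the argument of the cited reference. The only point you lean on silently is that the set function $A\mapsto\int_\Ga(K\1_A)\,d\mu$ of \eqref{cormeas} genuinely extends to a $\sigma$-additive locally finite measure on $\B(\Ga_0)$ (so that $\int_{\Ga_0}\1_A\,d\rho_\mu=\rho_\mu(A)$ and the notion $G\in L^1(\Ga_0,\rho_\mu)$ make sense); the paper's Definition asserts $\rho_\mu\in\Mlf$, so you are entitled to this, but a fully self-contained proof would verify countable additivity on measurable subsets of a fixed bounded block via monotone convergence, using that $K$ preserves positivity and that every measurable subset of a set in $\Bbg$ again lies in $\Bbg$. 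Your closing remark correctly identifies where the hypothesis $\mu\in\Mfm$ enters.
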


\begin{remark}
For any nonnegative $\B(\Ga_0)$-measurable function $G:\Ga_0\to \R_+:=[0;\,+\infty)$
the right hand side of \eqref{defK} defines a $\B(\Ga)$-function
$KG:\Ga\to [0;+\infty]$.
In this case the equality \eqref{GintEqKGint} holds true as well
(see~\cite[Corollary 4.6]{KK2002}).
\end{remark}

\begin{remark}
It was shown in \cite[Theorem 4.11]{KK2002}) that, for $\mu\in\Mfm$ and
 $G\in L^1(\Ga_0,\rho_\mu)$, the right hand side of \eqref{defK} is
$\mu$-a.\,s. absolutely convergent series, moreover, $KG\in L^1(\Ga,\mu)$ and the equality \eqref{GintEqKGint} holds true.
\end{remark}

\begin{proposition}[\!\!{\cite[Proposition~4.14]{KK2002}}]\label{RNder}
Let $\mu\in\Mpi$. Then the correlation measure $\rho_\mu$ is absolutely continuous w.r.t. the Lebesgue--Poisson measure $\la$ on $\bigl(\Ga_0,\B(\Ga_0)\bigr)$.
\end{proposition}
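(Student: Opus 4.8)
The plan is to produce, on each bounded piece of $\Ga_0$, an explicit nonnegative $\la$-density for $\rho_\mu$ and then glue these local representations together. First I would reduce to bounded sets. Since $\{\La_n\}_{n\in\N}$ exhausts $X$, the sets $B_n:=\bigsqcup_{k=0}^{n}\Ga_{\La_n}^{(k)}$ belong to $\Bbg$, increase, and satisfy $\bigcup_{n\in\N}B_n=\Ga_0$ (every finite configuration lies in some $\Ga_{\La_m}^{(k)}$). Hence, given $A\in\B(\Ga_0)$ with $\la(A)=0$, each $A\cap B_n$ is a bounded set contained in $\Ga_{0,\La_n}$ with $\la(A\cap B_n)=0$, and by continuity from below of the locally finite measure $\rho_\mu$ one has $\rho_\mu(A)=\lim_{n\to\infty}\rho_\mu(A\cap B_n)$. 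So it suffices to show $\rho_\mu(A)=0$ for every $A\in\Bbg$ with $A\subset\Ga_{0,\La}$ and $\la(A)=0$.

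Since $\mu\in\Mpi$ is locally absolutely continuous w.r.t. every $\pi_z$, I take $z=1$, write $\pi:=\pi_1$ with $\pi^\La=e^{-m(\La)}\la$ on $\Ga_\La=\Ga_{0,\La}$, and set $R_\La:=\dfrac{d\mu^\La}{d\pi^\La}\ge 0$. The first key observation is that for $A\subset\Ga_{0,\La}$ the cylindric function $K\1_A$ factors through $p_\La$: if $\eta\Subset\ga$ and $\eta\in A$ then $\eta\subset\La$, hence $\eta\subset\ga_\La$, so $(K\1_A)(\ga)=\sum_{\eta\subset\ga_\La}\1_A(\eta)=(K\1_A)\bigl(p_\La(\ga)\bigr)$. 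Noting that $\1_A\in\Bbs$ (so the integral is finite by Proposition~\ref{pb}) and using $\mu^\La=\mu\circ p_\La^{-1}$ in \eqref{cormeas}, the correlation integral becomes local:
\[
\rho_\mu(A)=\int_\Ga(K\1_A)(\ga)\,d\mu(\ga)=\int_{\Ga_\La}(K\1_A)(\eta)\,d\mu^\La(\eta)=e^{-m(\La)}\int_{\Ga_\La}(K\1_A)(\eta)R_\La(\eta)\,d\la(\eta).
\]

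The second key step is to recognize the $K$-transform as the convolution $K\1_A=\1_A*\1_{\Ga_0}$, where $\1_{\Ga_0}\equiv 1$, and to apply the Minlos-type identity \eqref{minlosid-ast} with $H=R_\La$ (extended by zero off $\Ga_\La$), $G_1=\1_A$, $G_2=\1_{\Ga_0}$. Because all three functions are nonnegative, Tonelli's theorem removes every integrability scruple and \eqref{minlosid-ast} gives
\[
\rho_\mu(A)=e^{-m(\La)}\int_{\Ga_\La}\int_{\Ga_\La}R_\La(\eta\cup\xi)\1_A(\eta)\,d\la(\xi)\,d\la(\eta)=\int_A k_\mu(\eta)\,d\la(\eta),
\]
where $k_\mu(\eta):=e^{-m(\La)}\int_{\Ga_\La}R_\La(\eta\cup\xi)\,d\la(\xi)\ge 0$ (for $\eta\subset\La$ the support of $R_\La$ forces $\xi\subset\La$, so the inner integral over $\Ga_0$ collapses to one over $\Ga_\La$). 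Thus $\rho_\mu$ restricted to $\Ga_{0,\La}$ admits the nonnegative $\la$-density $k_\mu$, and $\la(A)=0$ yields $\rho_\mu(A)=0$, which completes the reduction.

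I expect the main difficulty to be bookkeeping rather than conceptual: one must verify at each stage that the restriction to $\Ga_\La$ is legitimate—that $K\1_A$ is genuinely $\B_\La(\Ga)$-measurable and factors through $p_\La$, and that applying \eqref{minlosid-ast}, stated on $\Ga_0$, to functions supported on $\Ga_\La$ keeps the $\eta\cup\xi$ decomposition inside $\Ga_\La$. The hypothesis $\mu\in\Mfm$ enters only implicitly, guaranteeing $\rho_\mu\in\Mlf$ so that $\rho_\mu(A\cap B_n)<\infty$ and $k_\mu$ is finite $\la$-a.e.; it is the nonnegativity of $R_\La$ and $\1_A$ that makes the entire computation unconditional.
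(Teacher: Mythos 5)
Your proof is correct and follows essentially the same route as the argument the paper relies on (the cited Proposition~4.14 of Kondratiev--Kuna): localize to a bounded window $\La$, pass to the projection $\mu^\La$ and its density w.r.t.\ $\pi_z^\La$, and use the Minlos identity \eqref{minlosid-ast} with $G_2\equiv 1$ to exhibit an explicit nonnegative local density. Indeed, the density $e^{-m(\La)}\int_{\Ga_\La}R_\La(\eta\cup\xi)\,d\la(\xi)$ you obtain is exactly the formula \eqref{cfoverproj} quoted later in the paper, so no further comparison is needed.
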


Let $\mu\in\Mfm$. Suppose that the correlation measure $\rho_\mu$ is absolutely continuous w.r.t. the Lebesgue--Poisson measure $\la$ on $\bigl(\Ga_0,\B(\Ga_0)\bigr)$. Then the corresponding Radon--Nikodym derivative $k_\mu(\eta):=\dfrac{d\rho_\mu}{d\la}(\eta)$, $\eta\in\Ga_0$ is said to be the correlation functional of the measure $\mu$. By~Proposition~\ref{RNder}, for any measure $\mu\in\Mpi$, the corresponding correlation functional always exists. Clearly,
$k_\mu(\emptyset)=1$. Functions $k_{\mu}^{(n)}:{X}^n\to \R_+$ given by
 \[
 k_{\mu}^{(n)}(x_1,\ldots,x_n):=
 \begin{cases}
 k_{\mu}(\{x_1,\ldots,x_n\}),
 & \text{if } (x_1,\ldots,x_n)\in\widetilde{{X}^n},\\
 0, & \text{otherwise}
 \end{cases}
 \]
are said to be the correlation functions of $\mu$.
 It is worth noting, that if $k_\mu$ is the correlation functional of a measure $\mu\in\Mfm$, then one can rewrite \eqref{GintEqKGint} in the following form
 \begin{equation}\label{GintEqKGint-cf}
 \int_{\Ga_0}G(\eta)k_\mu(\eta )\,d\la(\eta)
 =\int_\Ga(KG)(\ga)\,d\mu(\ga).
 \end{equation}
 Sometimes, the correlation functions are defined via equality
 \eqref{GintEqKGint-cf}. Namely, a sequence of measurable symmetric nonnegative functions
 $k_{\mu}^{(n)}:{X}^n\to \R_+$, $n\in\N$, $k_\mu^{(0)}:=1$
 is said to be a system of correlation functions corresponding
 to $\mu\in\Mfm$ if, for any $n\in\N_0$ and any measurable symmetric
 nonnegative $f^{(n)}:{X}^n\to \R_+$, the following equality holds
 \begin{multline*}
 \int_\Ga\sum_{\{x_1,\ldots,x_n\}\subset\ga}f^{(n)}(x_1,\ldots,x_n)\,d\mu(\ga)
 \\=\frac{1}{n!}\int_{{X}^n}f^{(n)}(x_1,\ldots,x_n)k_\mu^{(n)}(x_1,\ldots,x_n)
 \, dm(x_1)\ldots dm(x_n).
 \end{multline*}
In particular, for any $\mu\in\Mpi$, there exists a system of correlation functions.

Let $C>0$ and~$\delta\geq 0$. We consider the following Banach space
\begin{equation}\label{Kca}
\K_{C,\,\delta} = \bigl\{
k:\Ga_0\to \R \bigm| |k(\eta)|\leq \mathrm{const}\cdot C^{|\eta|}
(|\eta| !)^\delta \text{ for } \la\text{-a.a. } \eta\in\Ga_0 \bigr\}
\end{equation}
with the norm $\|k\|_{{C,\,\delta}}:=\esssup_{\eta\in\Ga_0}
\dfrac{|k(\eta)|}{C^{|\eta|} (|\eta| !)^\delta}$.
Clearly, for $C'\geq C$, $\delta'\geq\delta$, the inclusion
$\K_{C,\,\delta}\subset\K_{C',\,\delta'}$ holds. For $\delta=0$, we will omit the index, namely, $\K_C:=\K_{C,0}$.

Let, for arbitrary $C>0$, $\delta\geq0$, $k\in\K_{C,\,\delta}$.
Then $\Bbs\subset L^1(\Ga_0, |k|\,d\la)$. If, additionally, $\delta\in[0;1)$, then~$e_\la(f)\in L^1(\Ga_0, |k|\,d\la)$, for any $f\in L^1({X},dm)$.
The proofs of the both statement are straightforward consequences of definition \eqref{Kca}.

As a result, if $\mu\in\Mfm$ and $k_\mu$ is the correlation functional of $\mu$, such that $k_\mu\in\K_{C,\,\delta}$, $C>0$, $\delta\in[0;1)$, then, for any $f\in L^1({X},dm)$,
 \begin{equation}\label{Kexplp}
 \bigl(K e_\la(f)\bigr)(\ga)=\prod_{x\in\ga}\bigl(1+f(x)\bigr) \quad \text{for $\mu$-a.a. $\ga\in\Ga$.}
 \end{equation}
In particular, the infinite product in the right hand side of~\eqref{Kexplp} is absolutely convergent, for $\mu$-a.a. $\ga\in\Ga$. It should be pointed out, that, for $f\in C_0({X})\subset L^1({X},dm)$, the equality \eqref{Kexplp} holds true for all $\ga\in\Ga$.

The following statement describes relations between correlation functions of a measure and their projections.
\begin{proposition}[\!\!{\cite[Propositions 4.14, 4.16]{KK2002}}]
Let $\mu\in\Mpi$. Then, for any $\La\in\Bb$, $z>0$,
 \begin{equation}\label{cfoverproj}
 k_\mu(\eta)=\int_{\Ga_\La}\frac{d\mu^\La}{d\pi_z^\La}(\eta\cup\ga)\,
 d\pi_z^\La(\ga)\quad \text{for $\la$-a.a. $\eta\in\Ga_\La$.}
 \end{equation}
If, additionally, $\int_{\Ga_\La} 2^{|\eta|} k_\mu(\eta) \,d\la(\eta)<\infty$, for all $\La\in\Bb$, then
 \begin{equation}\label{projovercf}
 \frac{d\mu^\La}{d\pi_z^\La}(\ga)=
 \int_{\Ga_\La}(-1)^{|\eta|}k_\mu(\ga\cup\eta)\,
 d\la(\eta) \quad \text{for $\pi_z^\La$-a.a. $\ga\in\Ga_\La$.}
 \end{equation}
\end{proposition}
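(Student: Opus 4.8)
The plan is to extract both identities from the defining relation \eqref{GintEqKGint-cf} for the correlation functional, tested against functions localized in $\La$, together with the splitting identity \eqref{minlosid-ast}. Throughout I write $p_\La:=\dfrac{d\mu^\La}{d\pi_z^\La}$.

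For \eqref{cfoverproj} I would fix $\La\in\Bb$ and take an arbitrary $G\in\Bbs$ whose local support lies in $\Ga_{0,\La}$. Since such a $G$ vanishes off $\Ga_{0,\La}$, the summation in \eqref{defK} only sees the finite configuration $\ga_\La$, so $KG$ is $\B_\La(\Ga)$-measurable with $(KG)(\ga)=\sum_{\eta\subset\ga_\La}G(\eta)$. Hence the right-hand side of \eqref{GintEqKGint-cf} collapses onto the projection, $\int_\Ga(KG)\,d\mu=\int_{\Ga_\La}(KG)(\ga)\,d\mu^\La(\ga)$. Writing $d\mu^\La=p_\La\,d\pi_z^\La$ and expressing $\pi_z^\La$ through $\la$, I would then invoke \eqref{minlosid-ast} with the constant function $\1$ in the second slot (note $KG=G\ast\1$ on $\Ga_\La$) to move the $K$-transform off $G$. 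This converts $\int_{\Ga_\La}(KG)(\ga)\,p_\La(\ga)\,d\pi_z^\La(\ga)$ into $\int_{\Ga_\La}G(\eta)\bigl[\int_{\Ga_\La}p_\La(\eta\cup\ga)\,d\pi_z^\La(\ga)\bigr]\,d\la(\eta)$. Comparing with the left-hand side $\int_{\Ga_\La}G(\eta)\,k_\mu(\eta)\,d\la(\eta)$ and using that $\Bbs$ is a measure-determining class on $\Ga_\La$ yields \eqref{cfoverproj} for $\la$-a.a.\ $\eta\in\Ga_\La$.

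For \eqref{projovercf} I would substitute the just-proved \eqref{cfoverproj} into its right-hand side, obtaining the iterated integral $\int_{\Ga_\La}(-1)^{|\eta|}\int_{\Ga_\La}p_\La(\ga\cup\eta\cup\xi)\,d\pi_z^\La(\xi)\,d\la(\eta)$. Granting that the order of integration may be exchanged, I would re-sum over $\omega:=\eta\cup\xi$ by a second application of \eqref{minlosid-ast}, pairing the signed weights $(-1)^{|\eta|}$ against the remaining factor. The inner combinatorial sum that appears is precisely the alternating sum $\sum_{\eta\subset\omega}(-1)^{|\eta|}$ underlying the inverse transform \eqref{k-1trans}, which equals $1$ for $\omega=\emptyset$ and vanishes for every $\omega\neq\emptyset$; consequently only the term $\omega=\emptyset$ survives and the whole expression reduces to $p_\La(\ga)$, giving \eqref{projovercf} for $\pi_z^\La$-a.a.\ $\ga\in\Ga_\La$.

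The crux — and the reason for the extra hypothesis $\int_{\Ga_\La}2^{|\eta|}k_\mu(\eta)\,d\la(\eta)<\infty$ — is the justification of Fubini's theorem in the inversion. The relevant double integral converges absolutely once $\int_{\Ga_\La}\int_{\Ga_\La}p_\La(\ga\cup\eta\cup\xi)\,d\la(\xi)\,d\la(\eta)<\infty$, and by \eqref{minlosid-ast} this equals $\int_{\Ga_\La}2^{|\omega|}p_\La(\ga\cup\omega)\,d\la(\omega)$, the weight $2^{|\omega|}$ arising from $\sum_{\eta\subset\omega}1=2^{|\omega|}$. Keeping only the empty-configuration contribution in \eqref{cfoverproj} shows $0\le p_\La(\cdot)\le e^{zm(\La)}k_\mu(\cdot)$ $\la$-a.e., so the assumed moment bound on $k_\mu$, together with the factorization $2^{|\ga\cup\omega|}=2^{|\ga|}2^{|\omega|}$ and $k_\mu\ge0$, dominates this integral for $\la$-a.a.\ $\ga$ and legitimizes the interchange. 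The only remaining care is bookkeeping with the almost-everywhere qualifiers: the densities are defined only up to $\la$- (resp.\ $\pi_z^\La$-) null sets, and the test-function identities pin down the functions of $\eta$ (resp.\ $\ga$) only modulo such null sets, which is exactly the precision asserted in the statement.
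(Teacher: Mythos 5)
The paper offers no proof of this proposition at all --- it is imported verbatim from \cite{KK2002} --- so your argument can only be judged on its own terms. Your overall strategy (test \eqref{GintEqKGint-cf} against $G\in\Bbs$ supported in $\Ga_{0,\La}$, use $KG=G\ast\1$ together with \eqref{minlosid-ast} to peel the $K$-transform off, then invert by pairing with $(-1)^{|\eta|}$ and collapsing $\sum_{\eta\subset\omega}(-1)^{|\eta|}=0^{|\omega|}$) is the standard route and is sound in outline. However, two steps do not survive scrutiny as written. The first is the normalization bookkeeping: \eqref{minlosid-ast} is an identity for $\la\otimes\la$, whereas your integrals mix $\la$ with $\pi_z^\La=e^{-zm(\La)}\la_z$. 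Carried out exactly, your first computation yields $\int_{\Ga_\La} G(\eta)\,z^{|\eta|}\bigl[\int_{\Ga_\La} p_\La(\eta\cup\ga)\,d\pi_z^\La(\ga)\bigr]d\la(\eta)$, i.e.\ the density of $\rho_\mu$ with respect to $\la_z$ rather than $\la$; and in the inversion the combined combinatorial weight is $\sum_{\eta\sqcup\xi=\omega}(-1)^{|\eta|}z^{|\xi|}=(z-1)^{|\omega|}$, which collapses to the single term $\omega=\emptyset$ only when $z=1$, and even then returns $e^{-m(\La)}p_\La(\ga)=\frac{d\mu^\La}{d\la}(\ga)$, not $p_\La(\ga)$. (Sanity check with $\mu=\pi_1$: then $k_\mu\equiv1$ and $\int_{\Ga_\La}(-1)^{|\eta|}d\la(\eta)=e^{-m(\La)}\neq1$.) This mismatch is arguably already present in the statement as printed, but your proof asserts the clean collapse to $p_\La(\ga)$ without noticing it; you should either work with $\la_z$ and the density $d\mu^\La/d\la_z$ throughout, where both identities close exactly, or carry the constants $e^{zm(\La)}$ and $z^{|\cdot|}$ explicitly and flag the discrepancy.

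The second gap is in the justification of Fubini. You reduce matters to the finiteness, for a.a.\ $\ga$, of $\int_{\Ga_\La}2^{|\omega|}p_\La(\ga\cup\omega)\,d\la(\omega)$ and then invoke the pointwise bound $p_\La\le e^{zm(\La)}k_\mu$ together with $2^{|\ga\cup\omega|}=2^{|\ga|}2^{|\omega|}$; but integrating the resulting bound over $\ga$ via \eqref{minlosid-ast} produces $\int_{\Ga_\La}4^{|\sigma|}k_\mu(\sigma)\,d\la(\sigma)$ (or $\int_{\Ga_\La}3^{|\sigma|}k_\mu(\sigma)\,d\la(\sigma)$ without the extra weight), which is strictly stronger than the assumed $\int_{\Ga_\La}2^{|\sigma|}k_\mu(\sigma)\,d\la(\sigma)<\infty$; for fixed $\ga$ the bound does not connect to the hypothesis at all, since $\omega\mapsto\ga\cup\omega$ does not preserve $\la$. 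The economical argument avoids the pointwise comparison entirely: by Tonelli and the already-established \eqref{cfoverproj}, the inner $\xi$-integral equals (a constant multiple of) $k_\mu(\ga\cup\eta)$, so the iterated integral of the absolute value is $\mathrm{const}\cdot\int_{\Ga_\La}k_\mu(\ga\cup\eta)\,d\la(\eta)$, and $\int_{\Ga_\La}\int_{\Ga_\La}k_\mu(\ga\cup\eta)\,d\la(\eta)\,d\la(\ga)=\int_{\Ga_\La}2^{|\sigma|}k_\mu(\sigma)\,d\la(\sigma)<\infty$ by one application of \eqref{minlosid-ast}. Hence the inner integral is finite for $\la$-a.a.\ $\ga$ and Fubini applies --- this is exactly what the $2^{|\eta|}$-moment hypothesis is calibrated to deliver.
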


One may formulate an inverse problem: about a possibility to recover a measure $\mu\in\Mfm$ by the given system of symmetric measurable functions $k^{(n)}$ such that $k^{(n)}=k_\mu^{(n)}$. This problem may be solve by the following A.\,Lenard's results.
\begin{definition}
A function $k:\Ga_0\to \R$ is said to be positive definite in the sense of Lenard if for any $G\in\Bbs$ with~$(KG)(\ga)\geq0$, for all $\ga\in\Ga$, the following holds $\int_{\Ga_0} G(\eta) k(\eta)\, d\la(\eta)\geq 0$.
\end{definition}
It is worth noting that if $k_\mu$ is the correlation functional of a measure $\mu\in\Mfm$ then \eqref{GintEqKGint-cf} yields that $k_\mu$ is positive definite in the sense of Lenard.

\begin{proposition}\label{exuniqmeas}
Let $k:\Ga_0\to \R$ be measurable.
 \begin{enumerate}
 \item{\rm \cite[Theorem 4.1]{Len1975a}}
 Suppose that $k$ is positive definite in the sense of Lenard and the normalized condition $k(\emptyset)=1$ holds true.
 Then, there exists at least one measure $\mu\in\Mfm$ such that $k$ is the correlation functional of the measure $\mu$.

 \item {\rm \cite[Theorem 2]{Len1973}} For any $n\in\N$, $\La\in\Bb$, set
 \begin{equation}\label{snLa}
    s_n^\La:= \frac{1}{n!}\int_{\La^n}k^{(n)}(x_1,\ldots,x_n)
 \,dm(x_1)\ldots dm(x_n).
 \end{equation}
 If only $\sum_{n\in\N}\bigl(s_{n+m}^\La\bigr)^{-\frac{1}{n}}=\infty$, for all $m\in\N$, $\La\in\Bb$, then there exists at most one measure $\mu\in\Mfm$ such that $k$ is the correlation functional of the measure $\mu$.
 \end{enumerate}
\end{proposition}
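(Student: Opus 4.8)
The plan for both parts is to localize to bounded volumes $\La\in\Bb$, treat the finite-volume projections $\mu^\La$ on $\Ga_\La$, and glue by consistency: since $\B(\Ga)=\sigma(N_\La\mid\La\in\Bb)$, a measure in $\Mfm$ is uniquely determined by, and---via the Kolmogorov-type extension already used above for $\pi_z$---can be rebuilt from, a consistent family $\{\mu^\La\}_{\La\in\Bb}$. Throughout, the $K$-transform is the bridge carrying $k$ on $\Ga_0$ to integrals over $\Ga$.

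For the first (existence) claim I would first turn Lenard positivity into a genuine positive functional. As $K$ is injective on $\Bbs$ (it has the inverse \eqref{k-1trans}), the rule $L(KG):=\int_{\Ga_0}G(\eta)k(\eta)\,d\la(\eta)$ well-defines a linear functional on $\{KG\mid G\in\Bbs\}\subset\Fcyl$, and positive definiteness in the sense of Lenard is exactly the assertion that $L(F)\ge0$ whenever $F=KG\ge0$ on $\Ga$; since $K\1_{\{\emptyset\}}=\1_\Ga$, the normalization $k(\emptyset)=1$ yields $L(\1_\Ga)=1$. Restricting to $G$ supported in $\Ga_{0,\La}$ gives a positive normalized functional $L_\La$ on a family of cylinder functions over $\Ga_\La$ that contains the constants, forms an algebra (because $K$ intertwines the convolution $\star$ of \eqref{star-q} with pointwise multiplication, and $\Bbs$ is stable under $\star$), and separates configurations (for finite $\eta_0$ one has $(K\1_{\{\eta_0\}})(\ga)=\1[\eta_0\subseteq\ga]$). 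I would represent $L_\La$ by a probability measure $\mu^\La$ on $\Ga_\La$ through a Riesz--Markov / moment-problem argument, verify that $\{\mu^\La\}$ is consistent (it is, being cut from the single functional $L$), and extend to $\mu$. The identity $\int_{\Ga_0}Gk\,d\la=\int_\Ga(KG)\,d\mu$, valid for all $G\in\Bbs$, then says precisely that $k$ is the correlation functional of $\mu$ (compare \eqref{GintEqKGint-cf}); testing $G=\1_{\Ga^{(n)}_\La}$, for which $KG=\binom{N_\La}{n}$, identifies the binomial moments of $N_\La$ with the (finite) numbers $s^\La_n$ and places $\mu$ in $\Mfm$. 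The delicate step is the representation: extracting a countably additive probability measure from the bare positive functional $L_\La$, which acts on \emph{unbounded} cylinder functions over the \emph{non-compact} space $\Ga_\La$, is not a direct Riesz--Markov application and needs a tightness/compactness input---this is the technical heart of the construction.

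For the second (uniqueness) claim, suppose $\mu_1,\mu_2\in\Mfm$ share the correlation functional $k$; then they share all $s^\La_n$, and by localization it suffices to prove $\mu_1^\La=\mu_2^\La$ for each $\La$. For every $0\le f\in C_0({X})$ with support in $\La$, the correlation functions fix all moments $\int_\Ga\langle f,\ga\rangle^{j}\,d\mu_i^\La$ as prescribed polynomials in the integrals of $k^{(1)},\dots,k^{(j)}$ against powers of $f$; since $\langle f,\ga\rangle\le(\sup f)\,N_\La$, these are dominated by $\int_\Ga N_\La^{j}\,d\mu_i^\La$, which are finite triangular combinations of the $s^\La_n$. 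The hypothesis $\sum_n(s^\La_{n+m})^{-1/n}=\infty$ is a Carleman-type condition, and I would use it to conclude that each $\langle f,\ga\rangle$ has a determinate moment problem, so that its law under $\mu_i^\La$ is fixed by $k$. Because the laws of $\langle f,\ga\rangle$ across such $f$ determine the Laplace functional $\int_\Ga e^{-\langle f,\ga\rangle}\,d\mu^\La$, and the latter characterizes the point-process law, we obtain $\mu_1^\La=\mu_2^\La$ and hence $\mu_1=\mu_2$. The crux here is precisely this determinacy passage---turning the summability of $(s^\La_{n+m})^{-1/n}$ into genuine moment-determinacy---together with the bookkeeping encoded by the shift $m$, which supplies determinacy not merely for the total count $N_\La$ but for the reduced sequences at every particle level.
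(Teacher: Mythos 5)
The paper does not prove this proposition at all: both parts are imported verbatim from Lenard's work (\cite[Theorem 4.1]{Len1975a} and \cite[Theorem 2]{Len1973}), with the only added remark that Lenard works on the larger space of \emph{multiple} configurations and that the adaptation of the existence statement to the simple-configuration space $\Ga$ is carried out in \cite[Theorem 4.4.1]{Kun1999}. So there is no in-paper argument to compare yours against; what can be judged is whether your sketch would stand on its own as a proof of Lenard's theorems, and it does not, because the two steps you yourself flag as ``the technical heart'' and ``the crux'' are precisely the entire content of the cited results. For existence, defining the positive normalized functional $L(KG)=\int_{\Ga_0}Gk\,d\la$ and observing the algebra structure via \eqref{eq:Fourier} is the easy part; the representation of $L_\La$ by a countably additive probability measure is where Lenard's compactification enters, and that compactification naturally produces a measure on the space of configurations \emph{with multiplicities}. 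Your outline omits the subsequent (nontrivial) step of showing that this measure is concentrated on simple configurations --- exactly the point the paper's remark is warning about and the reason \cite{Kun1999} is cited. A further small inaccuracy: you invoke separation of points via $K\1_{\{\eta_0\}}$, but for $|\eta_0|\ge1$ the singleton $\{\eta_0\}$ is $\la$-null, so this function is $0$ as an element of the relevant function space and separates nothing; fortunately separation is not what the construction actually needs.

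For uniqueness your strategy (reduce to moment determinacy of $\langle f,\ga\rangle$, hence of the Laplace functional, hence of $\mu^\La$) is a legitimate route and close in spirit to Lenard's, but the decisive implication --- that $\sum_n\bigl(s_{n+m}^\La\bigr)^{-1/n}=\infty$ for \emph{all} shifts $m$ yields a Carleman-type condition $\sum_j m_{2j}^{-1/(2j)}=\infty$ for the ordinary moments $m_j=\int\langle f,\ga\rangle^j\,d\mu^\La$ --- is asserted rather than derived. The passage from factorial-type moments $s_n^\La$ to ordinary moments involves Stirling-number sums, and extracting divergence of the Carleman series from divergence of the shifted series is a genuine lemma (it is where the parameter $m$ is consumed), not bookkeeping. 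In short: the architecture of both halves is the standard and correct one, but as submitted the proposal is a roadmap with the load-bearing lemmas left open, whereas the paper deliberately sidesteps the issue by citation.
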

  It should be pointed out, that, that, in \cite{Len1975a,Len1973}, the more wide space than $\Ga$ was considered (the so-called space of multiple configurations). An adaptation of \cite[Theorem 4.1]{Len1975a} to the case of $\Ga$ was realized in \cite[Theorem 4.4.1]{Kun1999}. The statement \cite[Theorem 2]{Len1973} is fulfilled for smaller space $\Ga$ obviously.

\begin{corollary}
  Let $k:\Ga_0\to \R$ be a positive definite function in the sense of Lenard, and suppose that the normalized condition $k(\emptyset)=1$ is holds true. Suppose that there exists $C>0$ such that $k\in\K_{C,2}$. Then there exists a unique measure $\mu\in\Mfm$ such that $k$ is the correlation functional of the measure $\mu$.
\end{corollary}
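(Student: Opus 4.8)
The plan is to read off existence directly from Proposition~\ref{exuniqmeas}(1) and to obtain uniqueness from Proposition~\ref{exuniqmeas}(2), using the growth bound $k\in\K_{C,2}$ solely to verify Lenard's divergence criterion. Existence is immediate: by hypothesis $k$ is positive definite in the sense of Lenard and $k(\emptyset)=1$, so Proposition~\ref{exuniqmeas}(1) produces at least one $\mu\in\Mfm$ whose correlation functional equals $k$. For uniqueness it then suffices, by Proposition~\ref{exuniqmeas}(2), to check that for every $m\in\N$ and $\La\in\Bb$ the numbers $s_n^\La$ of \eqref{snLa} satisfy $\sum_{n\in\N}\bigl(s_{n+m}^\La\bigr)^{-1/n}=\infty$.

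First I would record that each $s_n^\La\geq0$. Indeed, by \eqref{laz} and \eqref{snLa} one has $s_n^\La=\int_{\Ga_0}\1_{\Ga_\La^{(n)}}(\eta)\,k(\eta)\,d\la(\eta)$, and choosing $G=\1_{\Ga_\La^{(n)}}\in\Bbs$ gives $(KG)(\ga)=\binom{|\ga_\La|}{n}\geq0$ for all $\ga\in\Ga$, so Lenard positive definiteness yields $s_n^\La\geq0$. Hence, should some $s_{n+m}^\La$ vanish, the corresponding summand is $+\infty$ and the series diverges trivially; so we may assume $s_n^\La>0$ throughout. Now the hypothesis $k\in\K_{C,2}$ yields $|k^{(n)}(x_1,\dots,x_n)|\leq A\,C^n(n!)^2$ for a constant $A>0$, whence by \eqref{snLa}
\[
s_n^\La\leq\frac{1}{n!}\,A\,C^n(n!)^2\,m(\La)^n=A\,\bigl(Cm(\La)\bigr)^n\,n!.
\]
Writing $B:=Cm(\La)$ this gives $s_{n+m}^\La\leq A\,B^{n+m}(n+m)!$, and therefore
\[
\bigl(s_{n+m}^\La\bigr)^{-\frac1n}\geq A^{-\frac1n}\,B^{-\frac{n+m}{n}}\,\bigl((n+m)!\bigr)^{-\frac1n}.
\]

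The decisive step is the asymptotics of the last factor. As $n\to\infty$ one has $A^{-1/n}\to1$ and $B^{-(n+m)/n}\to B^{-1}$, while Stirling's formula gives $(n!)^{1/n}\sim n/e$; since $(n+m)!=n!\,(n+1)\cdots(n+m)$ and $\bigl((n+1)\cdots(n+m)\bigr)^{1/n}\to1$ for fixed $m$, we also get $\bigl((n+m)!\bigr)^{-1/n}\sim e/n$. Consequently $\bigl(s_{n+m}^\La\bigr)^{-1/n}\geq c/n$ for some $c>0$ and all large $n$, and the divergence of the harmonic series forces $\sum_{n\in\N}\bigl(s_{n+m}^\La\bigr)^{-1/n}=\infty$, as required by Proposition~\ref{exuniqmeas}(2). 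This harmonic-series estimate is the main point, and it is exactly the exponent $\delta=2$ that converts the factorial growth of the bound on $s_{n+m}^\La$ into the borderline divergent sum; any larger $\delta$ would push the Lenard criterion out of reach by this argument.
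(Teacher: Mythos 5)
Your proposal is correct and follows essentially the same route as the paper: existence directly from Proposition~\ref{exuniqmeas}(1), and uniqueness by bounding $s_n^\La\leq \mathrm{const}\cdot(Cm(\La))^n\,n!$ from the $\K_{C,2}$ hypothesis and then verifying Lenard's divergence criterion via $\bigl((n+m)!\bigr)^{-1/n}\asymp 1/n$ and the harmonic series. Your extra remarks (nonnegativity of $s_n^\La$ via $K\1_{\Ga_\La^{(n)}}=\binom{|\ga_\La|}{n}\geq 0$, and the explicit Stirling asymptotics) are sound refinements of the same argument rather than a different proof.
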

\begin{proof}
By~\eqref{snLa}, since $k\in\K_{C,2}$ one has, for all $n\in\N$,
 $s^\La_n\leq \mathrm{const} \cdot \bigl(Cm(\La)\bigr)^n n!.$
 Therefore, for all $l\in\N$,
 \begin{align*}
 \sum_{n\in\N}\bigl(s_{n+l}^\La\bigr)^{-\frac{1}{n}}
 \geq &\, \mathrm{const} \cdot\sum_{n\in\N}
 \bigl(Cm(\La)\bigr)^{-\frac{n+l}{n}}
 \bigl((n+l)!\bigr)^{-\frac{1}{n}}\\
 \geq& \, \mathrm{const} \cdot\bigl(Cm(\La)\bigr)^{-(1+l)}
 \sum_{n\in\N} (n!)^{-\frac{1}{n}}=\infty,
 \end{align*}
 which proves the assertion.
\end{proof}

The following statement shows that the~$K$-transform is a combinatorial Fourier transform w.r.t. the $\star$-convolution.
\begin{proposition}[\!\!{\cite[Proposition 3.11]{KK2002}}]\label{Fourier}
 Let $G_1, G_2\in L_\ls^0(\Ga_0)$. Then
 \begin{equation}\label{eq:Fourier}
 \bigl(K(G_1\star G_2)\bigr)(\ga)
 =(KG_1)(\ga)\cdot (KG_2)(\ga), \qquad\ga\in\Ga.
 \end{equation}
\end{proposition}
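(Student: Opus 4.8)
The plan is to expand the right-hand side of \eqref{eq:Fourier} directly from the definition \eqref{defK} of the $K$-transform and then regroup the resulting double sum according to the union of the two summation indices, at which point the definition \eqref{star-q} of the $\star$-convolution appears on its own. The whole argument is combinatorial; the only thing that genuinely needs care is the legitimacy of the regrouping.

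First I would record that everything in sight is a genuinely finite sum, so that no convergence issue arises and arbitrary rearrangements are permitted. Since $G_1,G_2\in L_\ls^0(\Ga_0)$, there exist $\La_1,\La_2\in\Bb$ with $G_i\upharpoonright_{\Ga_0\setminus\Ga_{0,\La_i}}=0$. Setting $\La:=\La_1\cup\La_2\in\Bb$, a term $G_1(\eta_1)\,G_2(\eta_2)$ can be nonzero only when $\eta_1\subset\ga_{\La_1}$ and $\eta_2\subset\ga_{\La_2}$, and there are only finitely many such subsets because $\ga_{\La_i}=\ga\cap\La_i$ is finite by Definition~\ref{def:Ga}. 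The same observation shows that $(G_1\star G_2)(\eta)$ vanishes unless $\eta\subset\La$, so that $G_1\star G_2\in L_\ls^0(\Ga_0)$ and the left-hand side of \eqref{eq:Fourier} is well defined.

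The core computation is then the chain
\begin{align*}
 (KG_1)(\ga)\cdot(KG_2)(\ga)
 &=\Bigl(\sum_{\eta_1\Subset\ga}G_1(\eta_1)\Bigr)
 \Bigl(\sum_{\eta_2\Subset\ga}G_2(\eta_2)\Bigr)
 =\sum_{\eta_1\Subset\ga}\sum_{\eta_2\Subset\ga}
 G_1(\eta_1)\,G_2(\eta_2)\\
 &=\sum_{\eta\Subset\ga}\,\sum_{\eta_1\cup\eta_2=\eta}
 G_1(\eta_1)\,G_2(\eta_2)
 =\sum_{\eta\Subset\ga}(G_1\star G_2)(\eta)
 =\bigl(K(G_1\star G_2)\bigr)(\ga).
\end{align*}
The step I expect to be the crux is the passage to the third expression, in which I partition the pairs $(\eta_1,\eta_2)$ of finite subsets of $\ga$ according to the value $\eta:=\eta_1\cup\eta_2$. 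Here one must check that $\eta_1\cup\eta_2\Subset\ga$ automatically, that conversely every finite $\eta\Subset\ga$ is attained (for instance by $\eta_1=\eta_2=\eta$), and that for fixed $\eta$ the pairs with $\eta_1\cup\eta_2=\eta$ are exactly the terms summed in \eqref{star-q}. In contrast to the disjoint decomposition governing the $*$-convolution \eqref{ast-q}, the subsets $\eta_i$ are here allowed to overlap, and it is precisely this feature that makes the fibres of the map $(\eta_1,\eta_2)\mapsto\eta_1\cup\eta_2$ coincide with the $\star$-sum and thereby turns the product of two $K$-transforms into the $K$-transform of a single function.
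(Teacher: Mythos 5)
Your argument is correct and complete: the finiteness observations justify the rearrangement, and the regrouping of pairs $(\eta_1,\eta_2)$ with $\eta_i\Subset\ga$ by the value of $\eta_1\cup\eta_2$ is exactly the mechanism that produces the $\star$-convolution, since for a fixed finite $\eta\subset\ga$ the fibre of the union map is precisely the index set in \eqref{star-q}. The paper itself offers no proof of this proposition --- it is quoted verbatim from \cite[Proposition~3.11]{KK2002} --- and your computation is the standard direct argument given there, so there is nothing to compare against beyond confirming that your treatment of the only delicate points (local supports guaranteeing finite sums, and the non-disjoint overlap of $\eta_1$ and $\eta_2$ being what distinguishes $\star$ from $*$) is accurate.
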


\begin{remark}
 Let $\mu\in\Mfm$ and $k_\mu$ be the correlation functional of the measure $\mu$.
 It was shown in~\cite[Lemma 4.12]{KK2002}, that the equality \eqref{eq:Fourier} holds for $\mu$-a.a. $\ga\in\Ga$ if only at least one of the following assumptions is fulfilled: 1)~$G_1,G_2\geq0$; 2)~$|G_1|\star|G_2|\in
 L^1(\Ga_0, k_\mu \,d\la)$; 3)~$G_1,G_2\in L^1(\Ga_0, k_\mu \,d\la)$.
\end{remark}

The next corollary is a direct consequence of Proposition~\ref{Fourier},
\cite[Proposition~3.5]{KK2002}, and the obvious observation that the class of functions
$\Fcyl$ is closed w.r.t. a product.
\begin{corollary}
 Let $F_1, F_2\in\Fcyl$. Then
 \begin{equation}\label{eq:invFourier}
 \bigl(K^{-1}(F_1\cdot F_2)\bigr)(\eta)
 =\bigl((K^{-1} F_1)\star (K^{-1} F_2)\bigr)(\eta),\qquad\eta\in\Ga_0.
 \end{equation}
\end{corollary}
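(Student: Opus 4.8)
The plan is to deduce this identity from the Fourier property of Proposition~\ref{Fourier} by running it backwards, exploiting that $K\colon L_\ls^0(\Ga_0)\to\Fcyl$ is a bijection with inverse $K^{-1}$ given by \eqref{k-1trans}. Concretely, I would set $G_i:=K^{-1}F_i$ for $i=1,2$; by \cite[Proposition~3.5]{KK2002} each $G_i$ lies in $L_\ls^0(\Ga_0)$, and by invertibility $KG_i=F_i$.

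With this substitution the right-hand side of \eqref{eq:Fourier} becomes exactly $F_1\cdot F_2$, so Proposition~\ref{Fourier} reads
\[
K\bigl(G_1\star G_2\bigr)=(KG_1)(KG_2)=F_1\cdot F_2 .
\]
Applying $K^{-1}$ to both ends then gives $G_1\star G_2=K^{-1}(F_1\cdot F_2)$, i.e.\ $(K^{-1}F_1)\star(K^{-1}F_2)=K^{-1}(F_1\cdot F_2)$, which is precisely \eqref{eq:invFourier}.

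Before the final step is legitimate I must verify two bookkeeping points guaranteeing that the relevant transforms act on their proper domains. First, $F_1\cdot F_2$ must be cylindric so that $K^{-1}(F_1\cdot F_2)$ is meaningful; this is the observation that $\Fcyl$ is closed under products, since if $F_1$ is $\B_{\La_1}(\Ga)$-measurable and $F_2$ is $\B_{\La_2}(\Ga)$-measurable, then $F_1\cdot F_2$ is $\B_{\La_1\cup\La_2}(\Ga)$-measurable. Second, $G_1\star G_2$ must possess a local support, so that $K^{-1}$ recovers it from its $K$-image; if $G_i$ vanishes off $\Ga_{0,\La_i}$, then in $(G_1\star G_2)(\eta)=\sum_{\xi_1\cup\xi_2=\eta}G_1(\xi_1)G_2(\xi_2)$ every nonzero summand forces $\xi_1\subset\La_1$ and $\xi_2\subset\La_2$, whence $\eta\subset\La_1\cup\La_2$, so $G_1\star G_2\in L_\ls^0(\Ga_0)$ with support inside $\La_1\cup\La_2$.

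The argument is essentially mechanical, so I do not expect any genuinely hard step; the only thing demanding care is precisely the support and measurability bookkeeping above, which certifies that $K$ and $K^{-1}$ are mutually inverse on the functions to which we apply them. A purely combinatorial alternative would expand both sides through the explicit formulas \eqref{k-1trans} and \eqref{star-q} and match coefficients over decompositions $\eta=\xi_1\cup\xi_2$, but this is more cumbersome and in effect re-proves Proposition~\ref{Fourier}, so I prefer the inversion route.
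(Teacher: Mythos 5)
Your proof is correct and follows exactly the route the paper indicates: the corollary is deduced from Proposition~\ref{Fourier}, the fact that $K^{-1}$ maps $\Fcyl$ into $L_\ls^0(\Ga_0)$ (\cite[Proposition~3.5]{KK2002}), and the closure of $\Fcyl$ under products. Your additional bookkeeping (that $G_1\star G_2$ again has local support, so $K^{-1}K$ genuinely recovers it) is a sound and slightly more careful elaboration of the same argument.
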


\begin{remark}\label{rem:fourier}
 We stress that the equality \eqref{eq:invFourier} holds for any functions $F_1,F_2$ which are defined on some subset of the configuration space $\Ga$ that contains the set of finite configurations $\Ga_0$.
\end{remark}

\begin{remark}\label{rem:posdefstar}
 Let $G\in\Bbs$. By Proposition \ref{Fourier},
 $K(G\star G)=|KG|^2\geq0$. Therefore, if a function $k$ is positive definite in the sense of Lenard, then it is positive definite in the sense of the $\star$-convolution (for definition see \cite{Fin2012a}).
 It was shown in \cite{KK2002}, that if $k\in K_{C,\,\delta}$,
 $C>0$, $\delta\in[0;1)$, $k(\emptyset)=1$, and $k$ is positive definite in the sense of $\star$-convolution, then there exists a unique measure $\mu\in\Mfm$ such that $k$ is its  correlation functional.
\end{remark}

\section{Spaces of configurations of different types}
This Section is devoted to spaces of configurations of two different types, which we denote by ``$+$'' and ``$-$''. We will need the following notations. Let us consider two copies of $\Ga$, $\Ga^{+}:=\Ga$ and $\Ga^{-}:=\Ga$, and we set $\Ga^{2}:=\Ga^{+}\times\Ga^{-}$. Let $\O(\Ga^2)$ be the product-topology on $\Ga^2$. We denote the corresponding Borel $\sigma$-algebra by $\B(\Ga^2)$. Let $\M^1(\Ga^2)$ denote the class of all probability measures on~$\bigl(\Ga^2, \B(\Ga^2)\bigr)$.
Similarly, we consider, for $Y\in\B({X})$, $n\in\N$,
    \[
        \Ga_{0,Y}^{+,(n)}:=\Ga_{0,Y}^{-,(n)}:=\Ga_{0,Y}^{(n)},
        \quad\Ga_{0,Y}^+:=\Ga_{0,Y}^-:=\Ga_{0,Y},\quad
        \Ga_{0,Y}^2:=\Ga_{0,Y}^+\times\Ga_{0,Y}^-,
    \]
and one can define the corresponding product-topology $\O(\Ga^2_{0,Y})$ and Borel $\sigma$-algebra $\B(\Ga^2_{0,Y})$.
As above, for $Y={X}$, we will omit $Y$ in subscript, and, for $Y\in\Bb$, we will omit $0$ in subscript, namely, $\Ga_0^2:=\Ga_{0,{X}}^2$, $\Ga_\La^2:=\Ga_{0,\La}^2$.
It is obvious that
    \begin{align*}
        \B(\Ga^2)&=\sigma\bigl(B^+\times B^- \bigm| B^\pm\in\B(\Ga^\pm)\bigr);\\
        \B(\Ga_{0,Y}^2)&=\sigma\bigl(B^+\times B^- \bigm| B^\pm\in\B(\Ga_{0,Y}^\pm)\bigr), \qquad Y\in\B({X}).
    \end{align*}

We define also some notions similar to one-type configuration spaces. For any $\La^\pm\in\Bb$, we consider a mapping
    $\mathrm{p}_{\La^+,\La^-}:\Ga^2\to \Ga^+_{\La^+}\times\Ga^-_{\La^-}$
    given by
    $\mathrm{p}_{\La^+,\La^-}(\ga^+,\ga^-):=\bigl(\ga^+_{\La^+},
        \ga^-_{\La^-}\bigr)$, $\ga^\pm\in\Ga^\pm$.
    The projection of $\mu\in\M^1(\Ga^2)$ on
    $\bigl(\Ga^+_{\La^+}\times\Ga^-_{\La^-},\B(\Ga^+_{\La^+}\times\Ga^-_{\La^-})\bigr)$
    is the measure $\mu^{\La^+,\La^-}$ given by
    $\mu^{\La^+,\La^-}(A):=\mu\bigl(\mathrm{p}^{-1}_{\La^+,\La^-}(A)\bigr)$,
    $A\in\B(\Ga^+_{\La^+}\times\Ga^-_{\La^-})$.
A measure $\mu\in\M^1(\Ga^2)$ is said to be locally absolutely continuous w.r.t. the measure $\pi_z\otimes\pi_z$, $z>0$ if, for any $\La^\pm\in\Bb$, the projection $\mu^{\La^+,\La^-}$ of $\mu$ is absolutely continuous w.r.t. the measure
$\pi_z^{\La^+}\otimes\pi_z^{\La^-}$
on $\bigl(\Ga^+_{\La^+}\times\Ga^-_{\La^-}, \B(\Ga^+_{\La^+}\times\Ga^-_{\La^-})\bigr)$.
In the case $\La^+=\La^-=\La\in\Bb$, we will use the notations $\mathrm{p}_\La,\mu^\La,\Ga_\La^2$ instead of $\mathrm{p}_{\La,\La},
    \mu^{\La,\La},\Ga^+_{\La}\times\Ga^-_{\La}$, correspondingly. The following statements generalize the properties of measures from~$\Mpi$. They were proved by the author in~\cite{Fin2009}.
\begin{proposition}\label{goodset}
Let $\mu\in\M^1(\Ga^2)$ be locally absolutely continuous w.r.t. $\pi_z\otimes\pi_z$, $z>0$. Let also $A\in\B({X})$, $m(A)=0$. Then
\begin{align*}
&\mu\bigl(\bigl\{ (\ga^+,\ga^-)\in\Ga^2\bigm|
\ga^+\cap\ga^-=\emptyset\bigr\}\bigr)=1;\\
&\mu\bigl(\bigl\{ (\ga^+,\ga^-)\in \Ga^2 \bigm| \ga^-\cap A\neq\emptyset
\bigr\}\bigr)=0;\\
 &(\mu\otimes m)\bigl(\bigl\{ (\ga^+,\ga^-,x)\in\Ga^2\times{X} \bigm| x\in\ga^+ \bigr\}\bigr)=0.
\end{align*}
\end{proposition}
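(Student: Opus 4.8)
The plan is to establish the three identities separately, in each case reducing to the one-type property recorded above for measures locally absolutely continuous w.r.t. the Poisson measure, together with the non-atomicity of $m$. I would dispose of the third identity first, as it is a direct application of Tonelli's theorem. For fixed $(\ga^+,\ga^-)\in\Ga^2$ the $x$-section of the set $\{(\ga^+,\ga^-,x)\mid x\in\ga^+\}$ is the set $\ga^+$ viewed as a subset of ${X}$; since $\ga^+$ is locally finite, hence at most countable, and $m$ is non-atomic, $m(\ga^+)=\sum_{x\in\ga^+}m(\{x\})=0$. Therefore $(\mu\otimes m)(\{(\ga^+,\ga^-,x)\mid x\in\ga^+\})=\int_{\Ga^2}m(\ga^+)\,d\mu(\ga^+,\ga^-)=0$.

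For the second identity I would pass to the marginal $\mu^-$ of $\mu$ on $\Ga^-$. The event $\{(\ga^+,\ga^-)\mid\ga^-\cap A\neq\emptyset\}$ depends only on $\ga^-$, so its $\mu$-measure equals $\mu^-(\{\ga^-\mid\ga^-\cap A\neq\emptyset\})$. Fixing an arbitrary $\La^+\in\Bb$, the second marginal of the projection $\mu^{\La^+,\La^-}$ coincides with the projection $(\mu^-)^{\La^-}$; since $\mu^{\La^+,\La^-}\ll\pi_z^{\La^+}\otimes\pi_z^{\La^-}$ and the second marginal of this product is $\pi_z^{\La^-}$ (because $\pi_z^{\La^+}$ is a probability measure), passing to second marginals preserves absolute continuity and gives $(\mu^-)^{\La^-}\ll\pi_z^{\La^-}$ for every $\La^-$. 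Thus $\mu^-$ is locally absolutely continuous w.r.t. $\pi_z$, and the one-type null-set property — which rests only on local absolute continuity, the set $\{\ga^-\mid\ga^-\cap A\neq\emptyset\}$ being already $\pi_z$-null when $m(A)=0$ — yields $\mu^-(\{\ga^-\mid\ga^-\cap A\neq\emptyset\})=0$.

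The first identity is the substantive one, equivalent to $\mu(\{\ga^+\cap\ga^-\neq\emptyset\})=0$. I would localize using the exhausting sequence $\{\La_n\}\subset\Bb$ from the Introduction: local finiteness gives $\{\ga^+\cap\ga^-\neq\emptyset\}=\bigcup_n\{\ga^+_{\La_n}\cap\ga^-_{\La_n}\neq\emptyset\}$, an increasing union, so by continuity of measure it suffices to prove $\mu(\{\ga^+_\La\cap\ga^-_\La\neq\emptyset\})=0$ for each $\La\in\Bb$. This event depends only on the projection $\mathrm{p}_\La$, whence its $\mu$-measure equals $\mu^\La(E_\La)$ with $E_\La:=\{(\eta^+,\eta^-)\in\Ga_\La^2\mid\eta^+\cap\eta^-\neq\emptyset\}$. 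As $\mu^\La\ll\pi_z^\La\otimes\pi_z^\La$, it remains to show that $E_\La$ is a $(\pi_z^\La\otimes\pi_z^\La)$-null set. Applying Tonelli again, for fixed finite $\eta^-=\{y_1,\dots,y_k\}$ one has $\{\eta^+\mid\eta^+\cap\eta^-\neq\emptyset\}=\bigcup_{j=1}^{k}\{\eta^+\mid y_j\in\eta^+\}$, and each set $\{\eta^+\mid y_j\in\eta^+\}$ is $\pi_z^\La$-null, being the projection onto $\Ga_\La$ of the $\pi_z$-null set $\{\ga\mid\ga\cap\{y_j\}\neq\emptyset\}$ (here one uses $\pi_z\in\Mpi$ and $m(\{y_j\})=0$). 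Hence the inner $\pi_z^\La$-measure vanishes for every $\eta^-$, so $(\pi_z^\La\otimes\pi_z^\La)(E_\La)=0$, and the claim follows.

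The step I expect to be the main obstacle is the transfer of a null set for the reference product measure to a null set for $\mu$: this is precisely where local absolute continuity is indispensable, and it is the reason the first identity is not merely a formal consequence of the product structure on $\Ga^2$. Beyond this, the only routine points to confirm are the $\B(\Ga^2)$-measurability of the sets $E_\La$ and $\{\ga^+_{\La_n}\cap\ga^-_{\La_n}\neq\emptyset\}$, which is verified exactly as for the one-type sets already used above.
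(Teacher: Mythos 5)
Your proof is correct. Note that the paper itself gives no argument for this proposition --- it simply refers to \cite{Fin2009} --- so there is nothing in-text to compare against; your write-up supplies exactly the standard argument one would expect there: localization via an exhausting sequence $\{\La_n\}$, transfer of null sets through the assumed local absolute continuity, and Tonelli combined with the non-atomicity of $m$ (respectively the $\la_z$-nullity of $\{\eta : y\in\eta\}$) to kill the sections. The one point worth making explicit is the one you already flag in passing: the one-type null-set property is stated in the paper for $\mu\in\Mpi$, hence under a finite-local-moments hypothesis that is not assumed here for the marginal $\mu^-$; since the property rests only on local absolute continuity, your use of it is legitimate, but a careful write-up should either say so (as you do) or run the localization argument for $\ga^-\cap A$ directly on the projections $\mu^{\La_n}$, exactly as you do for the first identity.
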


Let $\mu\in\M^1(\Ga^2)$. The marginal distributions of the measure $\mu$ are the probability measures $\mu^\pm$ on~$\bigl( \Ga^\pm, \B(\Ga^\pm)\bigr)$ given by
\begin{equation}
 \mu^\pm(A^\pm):=\int_{A^\pm}\int_{\Ga^\mp}\,d\mu(\ga^+,\ga^-),
 \qquad A^\pm\in\B(\Ga^\pm) .\label{defmarg}
\end{equation}
Let $\La\in\Bb$ and $\mu^\La$ be the projection of $\mu\in\M^1(\Ga^2)$ on $\Ga_\La^2$. The marginal distributions of $\mu^\La$ are the probability measures $(\mu^\La)^\pm$ on~$\bigl( \Ga_\La^\pm,
  \B(\Ga_\La^\pm)\bigr)$ which are defined analogously to~\eqref{defmarg}.
On the other hand, one can consider the projections $(\mu^\pm)^\La$ of $\mu^\pm$ on $\bigl( \Ga_\La^\pm, \B(\Ga_\La^\pm)\bigr)$ according to~\eqref{defmarg}.
In \cite{Fin2009}, the author shown that, for any $\La\in\Bb$,
\begin{equation}
(\mu^\pm)^\La=(\mu^\La)^\pm.\label{comuteprojmarg}
\end{equation}

We proceed now to definitions of other objects which are analogous to considered before in the one-type case.
  A function ${G}:\Ga_0^2\to \R$ has a local support if there exists $\La\in\Bb$
  such that ${G}\upharpoonright_{\Ga_0^2\setminus(\Ga_\La^+\times
    \Ga_\La^-)}=0$. Let $L^0_\ls(\Ga_0^2)$ denote the class of all measurable functions on $\Ga_0^2$ with local supports.
    A set ${B}\in \B(\Ga^2_0)$ is said to be bounded if there exist $\La\in\Bb$ and~$N\in\N$, such that
  ${B}\subset\bigl(\bigsqcup_{n=0}^N\Ga_\La
    ^{+,(n)}\bigr)\times\bigl(\bigsqcup_{n=0}^N\Ga_\La^{-,(n)}\bigr)$.
  Let $\BBg$ denote the class of all bounded sets in~$\B(\Ga^2_0)$.
A function ${G}:\Ga_0^2\to \R$ has a bounded support if there exists ${B}\in\BBg$ such that ${G}\upharpoonright_{\Ga_0^2\setminus\widetilde{B}}=0$.
   Let $\BBs$ denote the class of all bounded functions with bounded supports.
A measure ${\rho}$ on $\bigl(\Ga^2_0,\B(\Ga^2_0)\bigr)$
   is sid to be locally finite if ${\rho}({B})<\infty$, for all
   ${B}\in\BBg$. Let $\MLf$ denote the class of all such measures.
A measure $\mu\in\M^1(\Ga^2)$ has finite local moments of all orders if
$\int_{\Ga^2} |\ga^+_\La|^n |\ga^-_\La|^n \,d{\mu}
      (\ga^+,\ga^-)<\infty$ for all $\La\in\Bb$ and~$n\in\N_0$.
Let $\MFm$ denote the class of all probability measures on~$\Ga^2$ with finite local moments of all orders.
Let $\FCyl$ denote the class of measurable functions on~$\Ga^2$ which are cylindric by both variables.

\begin{definition}
 We consider the transformation
 $\KK:L_\ls^0(\Ga_0^2)\to \FCyl$ given by
 \begin{equation}\label{defK2}
 (\KK G)(\ga^+,\ga^-):=\sum_{\substack{\eta^+\Subset\ga^+\\
 \eta^-\Subset\ga^-}}G(\eta^+,\eta^-), \qquad (\ga^+,\ga^-)\in\Ga.
 \end{equation}
\end{definition}
    Let $I^\pm$ be unit operators (identical mappings) on functions on~$\Ga^\pm$ (and, hence, on $\Ga_0^\pm$).
    We define the following operators on functions on~$\Ga_0^2$: $K^+:=K\otimes I^-$, $K^-:=I^+\otimes K$.
    Then one can rewrite \eqref{defK2}:
    \begin{equation}
        \KK =K^+K^-=K^-K^+. \label{redefK2}
    \end{equation}
\eqref{redefK2} yields that
$\KK:L_\ls^0(\Ga_0^2)\to \FCyl$ is a linear mapping which preserves positive functions and has an inverse transform
\begin{align}
(\KK ^{-1}
F)(\eta^+,\eta^-)&=(K^+)^{-1}(K^-)^{-1}=(K^-)^{-1}(K^+)^{-1}\notag\\&=
\sum_{\substack{ \xi^+\subset\eta^+
\\ \xi^-\subset\eta^-}} (-1)^{\vert \eta^+\setminus\xi^+\vert + \vert
\eta^-\setminus\xi^-\vert}F(\xi^+,\xi^-), \quad (\eta^+,\eta^-)\in
\Ga ^2_0.\label{k2-1trans}
\end{align}

\begin{definition}
Let $\mu\in\MFm$. The correlation measure, corresponding to $\mu$, is the measure $\rho_\mu\in\MLf$ given by
 \begin{equation}\label{cormeas2}
 \rho_\mu(A):=\int_{\Ga^2}(\KK\1_A)(\ga^+,\ga^-)\,d\mu(\ga^+,\ga^-), \qquad
 A\in\BBg,
 \end{equation}
 where~$\1_A:\Ga_0^2\to \R$ is the indicator-function of a set $A\in\B(\Ga_0^2)$.
 Clearly, $\rho_\mu(\{\emptyset\}, \{\emptyset\})=1$.
 \end{definition}

The following propositions can be proved by analogy with the corresponding statements for the space $\Ga$.
\begin{proposition}\label{prop:intK2G}
Let $\mu\in\MFm$. Then $G\in \BBs$ yields $G\in
L^1(\Ga_0^2,\rho_\mu)$, moreover,
\begin{equation}\label{GintEqKGint2}
\int_{\Ga_0^2}G(\eta^+,\eta^-)\,d\rho_\mu(\eta^+,\eta^-)=\int_{\Ga^2}(\KK G)(\ga^+,\ga^-)\,d\mu(\ga^+,\ga^-).
\end{equation}
\end{proposition}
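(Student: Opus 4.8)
The plan is to mirror the one-type argument behind \eqref{GintEqKGint}, exploiting the factorisation $\KK=K^+K^-=K^-K^+$ of \eqref{redefK2} to reduce every two-variable estimate to the single-variable $K$-transform. First I would record that the identity \eqref{GintEqKGint2} holds, by the very definition \eqref{cormeas2} of $\rho_\mu$, for $G=\1_A$ with $A\in\BBg$, and hence, by linearity of both sides in $G$, for every finite linear combination of such indicators, that is, for every simple $\B(\Ga_0^2)$-measurable function with bounded support. The remaining work is then the integrability claim $G\in L^1(\Ga_0^2,\rho_\mu)$, which also secures $\rho_\mu\in\MLf$, together with the passage from simple functions to a general $G\in\BBs$.

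For integrability, since $G\in\BBs$ there are $M>0$ and $B\in\BBg$ with $|G|\le M\1_B$, so it suffices to bound $\rho_\mu(B)=\int_{\Ga^2}(\KK\1_B)\,d\mu$. Choosing $\La\in\Bb$ and $N\in\N$ with $B\subset\bigl(\bigsqcup_{n=0}^N\Ga_\La^{+,(n)}\bigr)\times\bigl(\bigsqcup_{n=0}^N\Ga_\La^{-,(n)}\bigr)=:B'$, monotonicity of $\KK$ on nonnegative functions gives $\KK\1_B\le\KK\1_{B'}$, and $\1_{B'}$ factorises as $\1^+\otimes\1^-$ with $\1^\pm$ the indicators of $\bigsqcup_{n=0}^N\Ga_\La^{\pm,(n)}$. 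Applying $K^-=I^+\otimes K$ and then $K^+=K\otimes I^-$ as in \eqref{redefK2} yields $(\KK\1_{B'})(\ga^+,\ga^-)=(K\1^+)(\ga^+)\,(K\1^-)(\ga^-)$, a product of one-variable $K$-transforms of bounded-support functions, each polynomially bounded by Proposition~\ref{pb}. Hence
\[
(\KK\1_B)(\ga^+,\ga^-)\le C\,(1+|\ga^+_\La|)^N(1+|\ga^-_\La|)^N,\qquad(\ga^+,\ga^-)\in\Ga^2.
\]
Expanding the right-hand side, its $\mu$-integral is a finite combination of the local moments $\int_{\Ga^2}|\ga^+_\La|^j|\ga^-_\La|^k\,d\mu$ with $0\le j,k\le N$, all finite because $\mu\in\MFm$; this gives $\rho_\mu(B)<\infty$ and $G\in L^1(\Ga_0^2,\rho_\mu)$.

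It then remains to extend \eqref{GintEqKGint2} from simple functions to all of $\BBs$. For $G\ge0$ I would take simple functions $0\le G_j\uparrow G$ with supports contained in a fixed $B\in\BBg$; positivity of $\KK$ (a sum of nonnegative terms) forces $\KK G_j\uparrow\KK G$ pointwise, so monotone convergence applied to both integrals, together with the already-established identity at each finite stage, yields \eqref{GintEqKGint2} for $G$, the right-hand side being finite by the bound above. The general signed case follows by decomposing $G=G^+-G^-$ into its bounded, bounded-support positive and negative parts and subtracting. I expect the one genuinely delicate point to be precisely the polynomial estimate for $\KK\1_B$ and its integrability: this is where the two-type finite-moment hypothesis $\mu\in\MFm$ is used, and where the factorisation \eqref{redefK2} is essential in order to transfer the one-variable bound of Proposition~\ref{pb} to the product setting.
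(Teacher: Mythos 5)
Your argument is correct and is precisely the ``proof by analogy with the one-type case'' that the paper invokes without writing out (the paper gives no proof of Proposition~\ref{prop:intK2G}, deferring to the scheme of \cite[Corollary 4.6]{KK2002}): indicators by definition of $\rho_\mu$, the polynomial bound via the factorisation $\KK=K^+K^-$ and Proposition~\ref{pb}, finite local moments for integrability, and monotone convergence plus decomposition into positive and negative parts. The only point worth noting is that your final moment estimate uses $\int_{\Ga^2}|\ga^+_\La|^j|\ga^-_\La|^k\,d\mu<\infty$ for $j\neq k$, which requires reading the definition of $\MFm$ in its evidently intended form with independent exponents (the paper itself does exactly this in the proof of the proposition on marginal distributions), so this is a feature of the paper's convention rather than a gap in your proof.
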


\begin{proposition}\label{RNder2}
Let a measure $\mu\in\MFm$ be locally absolutely continuous w.r.t. the measure
 $\pi_z\otimes\pi_z$, $z>0$. Let $\MPi$ denote the class of all such measures. Then the correlation measure $\rho_\mu$ is absolutely continuous w.r.t. the measure $\la^2:=\la\otimes\la$ on
 $\bigl(\Ga_0^2, \B(\Ga_0^2)\bigr)$.
The correlation functional of the measure $\mu$ is the corresponding Radon--Nikodym derivative
 \[
 k_\mu(\eta^+,\eta^-):=\frac{d\rho_\mu}{d\la^2}(\eta^+,
 \eta^-),\qquad(\eta^+,\eta^-)\in\Ga_0^2.
 \]
Then $k_\mu(\emptyset,\emptyset)=1$ and, for any $\La\in\Bb$ and $\la^2$-a.a. $(\eta^+,\eta^-)\in\Ga_\La^2$,
\begin{equation}\label{gencorfunc}
k_\mu(\eta^+,\eta^-)=\int_{\Ga^+_{\La}}\int_{\Ga^-_{\La}}
\frac{d\mu^{\La}}{d\la^2} (\eta^+\cup\xi^+,\eta^-\cup\xi^-)
d\la(\xi^+)d\la(\xi^-).
\end{equation}
\end{proposition}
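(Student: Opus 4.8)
The plan is to imitate the one-type derivation behind \eqref{cfoverproj}, exploiting the factorization $\KK=K^+K^-$ and the product structure $\la^2=\la\otimes\la$, so that both the absolute continuity of $\rho_\mu$ and the formula \eqref{gencorfunc} drop out of a single identity tested against indicators of bounded sets. First I would fix $\La\in\Bb$ and a bounded set $A\in\BBg$; by the definition of $\BBg$ one may enlarge $\La$ so that $A\subset\Ga_\La^2$. Since $\1_A$ then vanishes outside $\Ga_\La^2$, the finite sum defining $\KK\1_A$ only sees the projections $\ga^+_\La,\ga^-_\La$, and Proposition~\ref{prop:intK2G} (whose integrability is guaranteed precisely by $\mu\in\MFm$ together with the two-type analogue of the polynomial bound \eqref{polbdd}) gives
\[
\rho_\mu(A)=\int_{\Ga^2}(\KK\1_A)\,d\mu=\int_{\Ga_\La^2}(\KK\1_A)(\ga^+,\ga^-)\,d\mu^\La(\ga^+,\ga^-).
\]
Because $\mu\in\MPi$ we have $\mu^\La\ll\pi_z^\La\otimes\pi_z^\La$, and as each $\pi_z^\La$ is equivalent to $\la$ on $\Ga_\La$ (Remark~\ref{remPi}) it follows that $\mu^\La\ll\la^2$; writing $R:=\frac{d\mu^\La}{d\la^2}\in L^1(\Ga_\La^2,\la^2)$ turns the last integral into $\int_{\Ga_\La^2}(\KK\1_A)\,R\,d\la^2$.

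The crux is a dual identity for $\KK$ with respect to $\la^2$. In one variable, observing that on finite configurations $KG=G\ast\1_{\Ga_0}$ (the $\ast$-convolution with the indicator $\1_{\Ga_0}$ of the whole $\Ga_0$, i.e.\ the constant $1$) and applying \eqref{minlosid-ast} with the second factor equal to $\1_{\Ga_0}$ yields, for functions supported in $\Ga_\La$,
\[
\int_{\Ga_\La}(KG)(\ga)\,H(\ga)\,d\la(\ga)=\int_{\Ga_\La}G(\eta)\Bigl(\int_{\Ga_\La}H(\eta\cup\xi)\,d\la(\xi)\Bigr)d\la(\eta).
\]
Feeding this successively into the ``$+$'' and ``$-$'' variables through $\KK=K^+K^-$ and interchanging the order of integration produces the two-type dual identity
\[
\int_{\Ga_\La^2}(\KK G)\,H\,d\la^2=\int_{\Ga_\La^2}G(\eta^+,\eta^-)\,(\widehat{\KK}H)(\eta^+,\eta^-)\,d\la^2,
\]
where $(\widehat{\KK}H)(\eta^+,\eta^-):=\int_{\Ga_\La^2}H(\eta^+\cup\xi^+,\eta^-\cup\xi^-)\,d\la(\xi^+)\,d\la(\xi^-)$.

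Taking $G=\1_A$ and $H=R$ now rewrites $\rho_\mu(A)=\int_A\widehat{\KK}R\,d\la^2$, so that $k_\mu:=\widehat{\KK}R$ is exactly the right-hand side of \eqref{gencorfunc}. Since $A$ ranged over all bounded subsets of $\Ga_\La^2$ and these generate $\B(\Ga_\La^2)$ cell by cell, this shows $\rho_\mu\ll\la^2$ on $\B(\Ga_\La^2)$ with density $k_\mu$; as $\La$ is arbitrary and bounded sets exhaust $\B(\Ga_0^2)$, the global absolute continuity follows, and the $\la^2$-a.e.\ uniqueness of the Radon--Nikodym derivative makes \eqref{gencorfunc} independent of the chosen $\La$. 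Finally, setting $\eta^+=\eta^-=\emptyset$ collapses $\widehat{\KK}R$ to $\int_{\Ga_\La^2}R\,d\la^2=\mu^\La(\Ga_\La^2)=1$, which is the normalization $k_\mu(\emptyset,\emptyset)=1$.

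I expect the only genuine obstacle to be the rigorous justification of the two-type dual identity against the merely $\la^2$-integrable (rather than bounded) density $R$: one must legitimize interchanging the finite summation encoded in $\KK$ with the double integration and then applying Fubini across the two copies of $\Ga_\La$. This is exactly where boundedness of the support of $A$ and the finite-local-moments hypothesis enter, since they force the relevant series and integrals to converge absolutely; once this interchange is licensed, everything else is the routine tensor product of the one-type computation behind \eqref{cfoverproj}.
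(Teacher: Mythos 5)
Your proposal is correct and follows exactly the route the paper intends: the paper gives no explicit proof, stating only that the result ``can be proved by analogy with the corresponding statements for the space $\Ga$'' (i.e.\ the one-type formula \eqref{cfoverproj} from Kondratiev--Kuna), and your argument is precisely that analogy carried out — localization to $\Ga_\La^2$, the equivalence $\pi_z^\La\otimes\pi_z^\La\sim\la\otimes\la$, and the dual identity for $\KK=K^+K^-$ obtained by tensoring the identity \eqref{minlosid-ast}. The interchange of integrals you flag as the ``only genuine obstacle'' is in fact immediate from Tonelli, since $\1_A$ and the density $\frac{d\mu^\La}{d\la^2}$ are nonnegative and the left-hand side $\rho_\mu(A)$ is finite by local finiteness of the correlation measure.
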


\begin{proposition}
Let $\mu\in\MPi$. Then the marginal measures (distributions) $\mu^\pm$ belongs to $\Mpi$. Moreover, if $k_\mu$, $k_\mu^\pm$ are the correlation functionals of the measures $\mu$, $\mu^\pm$, correspondingly, the, for $\la$-a.a. $\eta^\pm\in\Ga^\pm$,
\begin{equation}\label{cf2onemptyset}
    k_\mu(\eta^+,\emptyset)=k^+_\mu(\eta^+), \qquad
    k_\mu(\emptyset,\eta^-)=k^-_\mu(\eta^-).
\end{equation}
\end{proposition}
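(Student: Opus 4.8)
The plan is to prove both marginal claims simultaneously and then obtain the two functional identities by evaluating the two-type correlation measure $\rho_\mu$ on the product sets $A^+\times\{\emptyset\}$ and $\{\emptyset\}\times A^-$, comparing with the one-type correlation measures of $\mu^\pm$.

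First I would verify that $\mu^\pm\in\Mpi$, so that the correlation functionals $k_\mu^\pm$ are even defined (Proposition~\ref{RNder}). Since $\mu$ has finite local moments, $\int_{\Ga^2}|\ga^+_\La|^n\,d\mu<\infty$ for all $\La\in\Bb$, $n\in\N_0$, whence $\int_\Ga|\ga_\La|^n\,d\mu^+=\int_{\Ga^2}|\ga^+_\La|^n\,d\mu<\infty$, and symmetrically for $\mu^-$; thus $\mu^\pm\in\Mfm$. For local absolute continuity I would invoke the commutation \eqref{comuteprojmarg}: as $\mu\in\MPi$, its projection $\mu^\La$ is absolutely continuous w.r.t.\ $\pi_z^\La\otimes\pi_z^\La$, and the first marginal of a measure absolutely continuous w.r.t.\ a product probability measure is absolutely continuous w.r.t.\ the first factor (a $\pi_z^\La$-null set $N\subset\Ga_\La^+$ yields the $(\pi_z^\La\otimes\pi_z^\La)$-null set $N\times\Ga_\La^-$). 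Hence $(\mu^+)^\La=(\mu^\La)^+\ll\pi_z^\La$ for every $\La$, i.e.\ $\mu^+\in\Mpi$, and likewise $\mu^-\in\Mpi$.

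The crux is a computation of $\KK$ on a section through $\emptyset$. For $A^+\in\Bbg$ put $G:=\1_{A^+}\otimes\1_{\{\emptyset\}}\in\BBs$. Using \eqref{redefK2} and applying $K^-=I^+\otimes K$ first, the sum $\sum_{\eta^-\Subset\ga^-}\1_{\{\emptyset\}}(\eta^-)$ collapses to the single term $\eta^-=\emptyset$ and equals $1$; applying $K^+=K\otimes I^-$ then gives
\begin{equation*}
(\KK G)(\ga^+,\ga^-)=(K\1_{A^+})(\ga^+),\qquad(\ga^+,\ga^-)\in\Ga^2,
\end{equation*}
independently of $\ga^-$. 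Feeding this into Proposition~\ref{prop:intK2G} (identity \eqref{GintEqKGint2}) and using the definition \eqref{defmarg} of the marginal $\mu^+$ together with the definition \eqref{cormeas} of the one-type correlation measure $\rho_{\mu^+}$, I obtain
\begin{equation*}
\rho_\mu\bigl(A^+\times\{\emptyset\}\bigr)=\int_{\Ga^2}(K\1_{A^+})(\ga^+)\,d\mu=\int_{\Ga^+}(K\1_{A^+})(\ga^+)\,d\mu^+=\rho_{\mu^+}(A^+).
\end{equation*}

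It then remains to translate this set identity into densities. By \eqref{laz} the atom $\{\emptyset\}$ satisfies $\la(\{\emptyset\})=1$, so by Proposition~\ref{RNder2} and Fubini
\begin{equation*}
\rho_\mu\bigl(A^+\times\{\emptyset\}\bigr)=\int_{A^+}k_\mu(\eta^+,\emptyset)\,d\la(\eta^+),
\end{equation*}
while $\rho_{\mu^+}(A^+)=\int_{A^+}k_\mu^+(\eta^+)\,d\la(\eta^+)$ by the definition of $k_\mu^+$. Since these agree for every $A^+\in\Bbg$ and $\la$ is locally finite, the integrands coincide for $\la$-a.a.\ $\eta^+$, giving $k_\mu(\eta^+,\emptyset)=k_\mu^+(\eta^+)$; the identity $k_\mu(\emptyset,\eta^-)=k_\mu^-(\eta^-)$ follows from the symmetric computation with $G=\1_{\{\emptyset\}}\otimes\1_{A^-}$. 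I expect the main obstacle to be the first step: one must make sure $\mu^\pm$ genuinely inherit finite local moments (so that $\mu^\pm\in\Mfm$ and the $K$-machinery applies) and correctly handle the positive-mass atom $\{\emptyset\}$ when passing between the two-type correlation measure on $\Ga_0^2$ and the one-type correlation measure on $\Ga_0$.
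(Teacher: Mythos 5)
Your proposal is correct, and for the identity \eqref{cf2onemptyset} it takes a genuinely different route from the paper. The paper works entirely at the level of Radon--Nikodym derivatives of projections: it derives the explicit formula \eqref{RNprj} for $d(\mu^+)^\La/d\pi_z^\La$ as a partial integral of $d\mu^\La/d(\pi_z^\La\otimes\pi_z^\La)$, then substitutes $\eta^-=\emptyset$ into the two-type projection formula \eqref{gencorfunc} and matches the result against the one-type formula \eqref{cfoverproj} applied to $\mu^+$. You instead work globally with the correlation measures, computing $\KK(\1_{A^+}\otimes\1_{\{\emptyset\}})=(K\1_{A^+})\otimes 1$ and deducing the set identity $\rho_\mu(A^+\times\{\emptyset\})=\rho_{\mu^+}(A^+)$ for all $A^+\in\Bbg$, which you then convert to an a.e.\ identity of densities using the key fact that $\la(\{\emptyset\})=1$, so the section $\eta^-=\emptyset$ is a positive-mass atom and the restriction of the $\la^2$-a.e.\ defined $k_\mu$ to it is legitimate. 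Your argument buys independence from the local formulas \eqref{gencorfunc} and \eqref{cfoverproj} (it needs only the defining properties \eqref{cormeas}, \eqref{cormeas2} of correlation measures), at the cost of the small extra step of passing from agreement of two locally finite measures on $\Bbg$ to $\la$-a.e.\ agreement of their densities --- which is harmless since $\Bbg$ is a generating $\cap$-stable class exhausting $\Ga_0$ countably. For the first assertion ($\mu^\pm\in\Mpi$) the two proofs are essentially the same: both reduce to \eqref{comuteprojmarg}; the paper exhibits the density \eqref{RNprj} explicitly while you argue via null sets of the form $N\times\Ga_\La^-$, which is an equivalent and slightly more economical formulation.
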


\begin{proof}
Let a set $\La\in\Bb$ and a measurable function
$F:\Ga_\La^+\to \R_+$ be arbitrary. By~\eqref{comuteprojmarg}, for all $z>0$, one has
\begin{align*}
  \int_{\Ga_\La^+}F(\ga^+)\, d(\mu^+)^\La (\ga^+) &=
  \int_{\Ga_\La^+}F(\ga^+)\, d(\mu^\La)^+ (\ga^+) =
  \int_{\Ga_\La^+}F(\ga^+) \int_{\Ga_\La^-}\,d\mu^\La (\ga^+,\ga^-)\\&=\int_{\Ga_\La^+}F(\ga^+) \int_{\Ga_\La^-}\frac{d\mu^\La}{d(\pi_z^\La\otimes\pi_z^\La)} (\ga^+,\ga^-)\,d(\pi_z^\La\otimes\pi_z^\La) (\ga^+,\ga^-)\\&=\int_{\Ga_\La^+}F(\ga^+) \biggl(\,\int_{\Ga_\La^-}\frac{d\mu^\La}{d(\pi_z^\La\otimes\pi_z^\La)} (\ga^+,\ga^-)\,d\pi_z^\La(\ga^-)\biggr)d\pi_z^\La(\ga^+).
\end{align*}
Therefore, the measure $\mu^+$ is locally absolutely continuous w.r.t. the Poisson measure $\pi_z$, $z>0$ and, moreover, for $\pi_z^\La$-a.a. $\ga^+\in\Ga^+_\La$,
\begin{equation}\label{RNprj}
  \frac{d(\mu^+)^\La}{d\pi_z^\La}(\ga^+)=
  \int_{\Ga_\La^-}\frac{d\mu^\La}{d(\pi_z^\La\otimes\pi_z^\La)} (\ga^+,\ga^-)\,d\pi_z^\La(\ga^-).
\end{equation}
The obvious fact that, for all $\La\in\Bb$, $n\in\N$,
\[
\int_{\Ga^+}\bigl\vert\ga^+\cap\La\bigr\vert^n \,d\mu^+(\ga^+)= \int_{\Ga^2} \bigl\vert\ga^+\cap\La\bigr\vert^n \bigl\vert\ga^-\cap\La\bigr\vert^0\, d\mu(\ga^+,\ga^-)<\infty,
\]
implies $\mu^+\in\Mfm$.

Then, \eqref{cfoverproj} yields that, for $\la_z$-a.a. $\eta^+\in\Ga^+_{\La}$ and for any $\La\in\Bb$,
\begin{equation}\label{corfunc+}
k^+_\mu(\eta^+)=\int_{\Ga^+_{\La}} \frac{d(\mu^+)^{\La}}{d\la_z^\La}
(\eta^+\cup\xi^+) d\la_z(\xi^+).
\end{equation}

Put, in \eqref{gencorfunc}, $\eta^-=\emptyset$; then, by \eqref{RNprj} and~\eqref{corfunc+}, one get
\begin{align}
k_\mu(\eta^+,\emptyset)=&\int_{\Ga^+_{\La}}\biggl(\,\int_{\Ga^-_{\La}}
\frac{d\mu^{\La}}{d\la^2_z} (\eta^+\cup\xi^+,\xi^-)
d\la_z(\xi^-)\biggr)d\la_z(\xi^+)\notag\\
=&\int_{\Ga^+_{\La}} \frac{d(\mu^{\La})^+}{d\la_z}
(\eta^+\cup\xi^+) d\la_z(\xi^+)=k^+_\mu(\eta^+).
\end{align}
The proof for $k^-_\mu$ is the same.
\end{proof}

\begin{remark}
 Proposition \ref{prop:intK2G} implies that if $k_\mu$ is the correlation functional of some measure $\mu\in\MFm$, then $k_\mu$ is positive definite in the sense of Lenard.
\end{remark}

The recall about the following convolution between measurable functions $G_1$ and~$G_2$ on $\Ga_0^2$ (for details see \cite{Fin2012a}).
\begin{equation}\label{star2}
 (G_1\Star G_2)(\eta^+,\eta^-):=
 \sum_{\substack{\xi^+_1\sqcup\xi^+_2\sqcup\xi^+_3=\eta^+\\
 \xi^-_1\sqcup\xi^-_2\sqcup\xi^-_3=\eta^-}}
 G_1(\xi_1^+\cup\xi_2^+,\xi_1^-\cup\xi_2^-)\,
 G_2(\xi_2^+\cup\xi_3^+,\xi_2^-\cup\xi_3^-).
\end{equation}

By Proposition \ref{Fourier} and \eqref{redefK2}, \eqref{star2}, we obtain that, for any $G_1, G_2\in L_\ls^0(\Ga_0^2)$,
 \begin{equation}\label{eq:Fourier2}
 \bigl(\KK(G_1\Star G_2)\bigr)(\ga^+,\ga^-)
 =(\KK G_1)(\ga^+,\ga^-)\cdot (\KK G_2)(\ga^+,\ga^-), \quad (\ga^+,\ga^-)\in\Ga^2.
 \end{equation}
\begin{remark}
Similarly to Remark \ref{rem:posdefstar}, one get the following. By \eqref{eq:Fourier2}, $\KK(G\Star G)=|\KK G|^2\geq0$; then the positive definiteness in the sense of Lenard implies the positive definiteness in the sense of the $\Star$-convolution (for the definition of the latter see \cite{Fin2012a}).
\end{remark}

\section{Convolution of measures}
\subsection{Main properties}
During this Section we will use the following notations.
Let $F:\Ga\to \R$ be a measurable function. Consider the measurable function $\widetilde{F}:\Ga^2\to \R$ given by
$\widetilde{F}(\ga^+,\ga^-)= F(\ga^+\cup\ga^-)$, $(\ga^+,\ga^-)\in\Ga^2$.
Let $\mu_i\in\Mfm$, $i=1,2$. Consider the measure $\widehat{\mu}$ on~$\bigl(\Ga^2,\B(\Ga^2)\bigr)$ given by $d\widehat{\mu}(\ga^+,\ga^-)=d\mu_1(\ga^+)\,d\mu_2(\ga^-)$. On other words, $\widehat{\mu}=\mu_1\otimes\mu_2$. Clearly,~$\widehat{\mu}\in\MFm$.

\begin{definition}
    Let $\mu_i\in\M^1(\Ga)$, $i=1,2$.
    A measure $\mu\in\M^1(\Ga)$ is said to be the convolution of the measures $\mu_1$ and~$\mu_2$ if, for any measurable function $F:\Ga\to \R$, such that $\widetilde{F}\in
    L^1(\Ga^2,d\widehat{\mu})$, the following equality holds true
\begin{equation}\label{convofmeasGa}
            \int_\Ga F(\ga)d\mu(\ga)
        =\int_{\Ga^2}\widetilde{F}(\ga^+,\ga^-)d\widehat{\mu}(\ga^+,\ga^-)
        =\int_{\Ga^+}\int_{\Ga^-}F(\ga^{+}\cup \ga^-) \,d\mu_1(\ga^+)\,d\mu_2(\ga^-).
\end{equation}
    We denote this by $\mu=\mu_1\ast\mu_2$.
\end{definition}

\begin{proposition}
Let $\mu_i\in\Mfm$, $i=1,2$ and~$\mu=\mu_1\ast\mu_2$. Then $\mu\in\Mfm$. If, additionally, $\mu_i\in\Mpi$, $i=1,2$, then ~$\mu\in\Mpi$.
\end{proposition}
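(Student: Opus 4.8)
The plan is to verify the two assertions separately, both by reducing statements about the infinite-volume measure $\mu=\mu_1\ast\mu_2$ to statements about the product measure $\widehat{\mu}=\mu_1\otimes\mu_2$ via the defining identity \eqref{convofmeasGa}.

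First I would show $\mu\in\Mfm$, i.e. that $\int_\Ga|\ga_\La|^n\,d\mu(\ga)<\infty$ for all $\La\in\Bb$ and $n\in\N_0$. The natural test function is $F(\ga)=|\ga_\La|^n$, which is nonnegative; applying \eqref{convofmeasGa} gives $\widetilde F(\ga^+,\ga^-)=|(\ga^+\cup\ga^-)_\La|^n=|\ga^+_\La\cup\ga^-_\La|^n$. The key observation is that $|\ga^+_\La\cup\ga^-_\La|\leq|\ga^+_\La|+|\ga^-_\La|$, so $\widetilde F(\ga^+,\ga^-)\leq(|\ga^+_\La|+|\ga^-_\La|)^n$. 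Expanding by the binomial theorem and integrating against the product measure $\widehat\mu=\mu_1\otimes\mu_2$, each term factorizes into $\int_{\Ga^+}|\ga^+_\La|^j\,d\mu_1\cdot\int_{\Ga^-}|\ga^-_\La|^{n-j}\,d\mu_2$, both finite since $\mu_1,\mu_2\in\Mfm$. Hence the integral is finite and $\mu\in\Mfm$.

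Next I would treat local absolute continuity. Assume $\mu_i\in\Mpi$, so each $\mu_i$ is locally absolutely continuous w.r.t.\ $\pi_z$; I must show the projection $\mu^\La$ is absolutely continuous w.r.t.\ $\pi_z^\La$ for every $\La\in\Bb$. The plan is to express $\mu^\La$ in terms of the projections $\mu_i^\La$. The delicate point is that the map $(\eta^+,\eta^-)\mapsto\eta^+\cup\eta^-$ is not injective (points may coincide), so a direct push-forward argument is not immediate. However, by Proposition \ref{goodset} applied to $\widehat\mu=\mu_1\otimes\mu_2\in\MPi$, the set $\{(\ga^+,\ga^-)\mid\ga^+\cap\ga^-\neq\emptyset\}$ is $\widehat\mu$-null, so $\la$-a.e.\ the union is in fact a disjoint union $\ga^+\sqcup\ga^-$. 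This is exactly what links the set-union convolution here to the disjoint-union convolution of the finite-configuration theory. Thus on a set of full measure the projection $\mu^\La$ is the image of $\mu_1^\La\otimes\mu_2^\La$ under disjoint union, which is precisely the convolution $\ast$ of measures on $\Ga_\La=\Ga_{0,\La}$ in the sense of \eqref{convmeasfin}.

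I would then invoke the finite-configuration result recalled from \cite{Fin2012a}: if $\mu_i^\La$ has density $k_i=d\mu_i^\La/d\la$ w.r.t.\ the Lebesgue--Poisson measure, then the convolution has density $k_1\ast k_2$ in the sense of \eqref{ast-q}, using identity \eqref{minlosid-ast}. Here each $\mu_i^\La$ is absolutely continuous w.r.t.\ $\pi_z^\La=e^{-zm(\La)}\la_z$, hence w.r.t.\ $\la$ (the factors $e^{-zm(\La)}$ and the Remark \ref{remPi} equivalence of $\la_{z_1},\la_{z_2}$ absorb the intensity), so the densities exist and the convolution density exists, giving $\mu^\La\ll\pi_z^\La$. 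I expect the main obstacle to be the bookkeeping in this last step---carefully reconciling the set-union operation in \eqref{convofmeasGa} with the disjoint-union convolution \eqref{convmeasfin}, and verifying that the null-set from Proposition \ref{goodset} genuinely permits identifying $\mu^\La$ with $\mu_1^\La\ast\mu_2^\La$ on $\Ga_{0,\La}$ rather than merely on an abstract quotient. Once that identification is justified, the absolute continuity follows immediately from the finite-volume theory.
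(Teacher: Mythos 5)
Your proposal is correct and follows essentially the same route as the paper: the first claim is proved by the same binomial expansion and factorization over $\mu_1\otimes\mu_2$, and the second by passing to the projections $\mu_i^\La$, introducing their densities w.r.t.\ $\la$, and identifying the density of $\mu^\La$ as the $\ast$-convolution of those densities via \eqref{minlosid-ast}. The only cosmetic difference is that the paper carries out this finite-volume convolution computation inline for a general $\B_\La(\Ga)$-measurable test function rather than citing the disjoint-union identification and the density result from \cite{Fin2012a} as a black box.
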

\begin{proof}
For any $\La\in\Bb$, $n\in\N$, one has
\begin{align*}
\int_\Ga |\ga_\La|^n\,d\mu(\ga)&=
\int_{\Ga^+}\int_{\Ga^-}\bigl(|\ga^+_\La|+|\ga^-_\La|\bigr)^n
\,d\mu_1(\ga^+)\,d\mu_2(\ga^-)\\
&=\sum_{k=0}^n \binom{n}{k}\int_\Ga |\ga_\La|^k\,d\mu_1(\ga)
\int_\Ga |\ga_\La|^{n-k}\,d\mu_2(\ga)<\infty,
\end{align*}
which proves the first statement. Next, for any $\B_\La(\Ga)$-measurable function $F$, one has
\begin{align*}
  \int_{\Ga_\La} F(\ga)\,d\mu^\La(\ga)&=  \int_{\Ga} F(\ga)\,d\mu(\ga)
  =\int_{\Ga^+}\int_{\Ga^-}F(\ga^+\cup\ga^-)\,d\mu_1(\ga^+)\,d\mu_2(\ga^-)
  \\&=\int_{\Ga^+_\La}\int_{\Ga^-_\La}F(\ga^+\cup\ga^-)\,d\mu_1^\La(\ga^+)\,d\mu_2^\La(\ga^-)
    \\&=\int_{\Ga^+_\La}\int_{\Ga^-_\La}F(\ga^+\cup\ga^-)\frac{d\mu_1^\La}{d\la}(\ga^+)
    \frac{d\mu_2^\La}{d\la}(\ga^-)\,d\la(\ga^+)\,d\la(\ga^-)
    \\&=\int_{\Ga_\La}F(\ga)\biggl(\frac{d\mu_1^\La}{d\la}\ast
    \frac{d\mu_2^\La}{d\la}\biggr)(\ga)\,d\la(\ga),
\end{align*}
where we used \eqref{minlosid-ast}. This prove the second statement as well.
\end{proof}

The following proposition describes the connection between convolutions of measures on the spaces $\Ga$ and~$\Ga_0$.
\begin{proposition} \label{connection}
Let $\mu_i\in\Mpi$ and $\rho_i$ be the corresponding correlation measures, $i=1,2$.
Then $\rho:=\rho_1\ast\rho_2$ is the correlation measure for $\mu:=\mu_1\ast\mu_2$.
\end{proposition}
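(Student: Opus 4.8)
The plan is to verify directly that $\rho := \rho_1 \ast \rho_2$ satisfies the defining property \eqref{cormeas} of the correlation measure of $\mu := \mu_1 \ast \mu_2$. That is, I want to show that for every $A \in \Bbg$,
\begin{equation*}
    \rho(A) = \int_\Ga (K\1_A)(\ga)\,d\mu(\ga).
\end{equation*}
Since the objects involved are naturally described through the $K$-transform and the $\star$-convolution, my strategy is to translate everything into correlation functionals and integrals against $\la$, where the identities from Section~3 apply. First I would recall that, by the result quoted in the Introduction, the convolution on $\Ga_0$ acts on Radon--Nikodym derivatives via \eqref{ast-q}: if $k_i = k_{\mu_i} = \frac{d\rho_i}{d\la}$ exist (which they do, by Proposition~\ref{RNder}, since $\mu_i \in \Mpi$), then $\rho = \rho_1 \ast \rho_2$ is absolutely continuous w.r.t.\ $\la$ with derivative $k := k_1 \ast k_2$. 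Thus it suffices to prove that $k_1 \ast k_2$ is the correlation functional of $\mu_1 \ast \mu_2$.

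The key computational step is to test $k = k_1 \ast k_2$ against an arbitrary $G \in \Bbs$ and match it with the right-hand side of \eqref{GintEqKGint-cf} for the measure $\mu$. Using the convolution identity \eqref{minlosid-ast} with $H = G$, I would write
\begin{equation*}
    \int_{\Ga_0} G(\eta)\,(k_1 \ast k_2)(\eta)\,d\la(\eta)
    = \int_{\Ga_0}\int_{\Ga_0} G(\eta \cup \xi)\,k_1(\eta)\,k_2(\xi)\,d\la(\xi)\,d\la(\eta).
\end{equation*}
Now I would apply \eqref{GintEqKGint-cf} to each factor measure: fixing $\eta$ and integrating $\xi \mapsto G(\eta \cup \xi)\,k_2(\xi)$ against $\la$ yields $\int_{\Ga^-} (K G(\eta \cup \cdot))(\ga^-)\,d\mu_2(\ga^-)$, and then integrating the result against $k_1\,d\la$ in the $\eta$-variable produces a further $K$-transform and an integral against $\mu_1$. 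The outcome should be a double integral over $\Ga^+ \times \Ga^-$ of a $K$-transform of $G$ applied to the \emph{union} $\ga^+ \cup \ga^-$; invoking the defining property \eqref{convofmeasGa} of $\mu = \mu_1 \ast \mu_2$ (with $F = KG$) then collapses this to $\int_\Ga (KG)(\ga)\,d\mu(\ga)$, which by \eqref{GintEqKGint} equals $\int_{\Ga_0} G\,d\rho_\mu$. Comparing with the left-hand side gives $k = k_\mu$ on $\Bbs$, hence $\rho = \rho_\mu$.

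The main obstacle, and the step deserving the most care, is the bookkeeping that connects $K$ acting on $\Ga_0$ with the union operation on $\Ga^2$ under $\mu_1 \otimes \mu_2$. Concretely, I expect the heart of the matter to be an identity of the form $(KG)(\ga^+ \cup \ga^-) = \sum_{\eta \Subset \ga^+,\ \xi \Subset \ga^-} (\text{terms reassembling } G)$, reflecting that a finite subset of $\ga^+ \cup \ga^-$ splits (since $\ga^+ \cap \ga^- = \emptyset$ for $\widehat\mu$-a.a.\ pairs, by Proposition~\ref{goodset}) into a part inside $\ga^+$ and a part inside $\ga^-$. Establishing this splitting rigorously—together with the attendant integrability, so that \eqref{minlosid-ast}, \eqref{GintEqKGint-cf} and \eqref{convofmeasGa} all apply with well-defined integrals—is where the real work lies; the almost-sure disjointness from Proposition~\ref{goodset} is exactly what guarantees the decomposition is unambiguous and that no double-counting occurs.
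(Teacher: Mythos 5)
Your proposal is correct, and it pivots on the same two facts as the paper's proof — the splitting identity $(KG)(\ga^+\cup\ga^-)=\sum_{\eta\Subset\ga^+}\sum_{\xi\Subset\ga^-}G(\eta\cup\xi)$ for disjoint configurations, and the $\widehat{\mu}$-a.s.\ disjointness supplied by Proposition~\ref{goodset} — but it is routed differently. The paper stays entirely at the level of measures: it first verifies, by testing on product sets, that $\widehat{\rho}=\rho_1\otimes\rho_2$ is the correlation measure of $\widehat{\mu}=\mu_1\otimes\mu_2$ on the two-component space, and then applies the two-component duality \eqref{GintEqKGint2} together with the splitting identity to pass from $\int\widetilde{G}\,d\widehat{\rho}$ to $\int KG\,d\mu$. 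You instead descend to correlation functionals: you invoke Proposition~\ref{RNder} and the Part~I result that $\rho_1\ast\rho_2$ has density $k_1\ast k_2$, and then combine \eqref{minlosid-ast} with two successive applications of the one-component duality \eqref{GintEqKGint-cf}, once in each variable, to arrive at the same double integral over $\Ga^+\times\Ga^-$. This avoids the $\KK$-transform and the product-correlation-measure lemma altogether, at the price of (i) leaning on the density-level convolution theorem from Part~I, and (ii) having to check that the intermediate functions $\xi\mapsto G(\eta\cup\xi)$ and $\eta\mapsto\int_{\Ga^-}\sum_{\xi\Subset\ga^-}G(\eta\cup\xi)\,d\mu_2(\ga^-)$ remain in $\Bbs$ so that \eqref{GintEqKGint-cf} applies at each stage; they do, since the supports stay bounded and the second function is uniformly bounded because $\mu_2\in\Mfm$ and $KG$ is polynomially bounded by Proposition~\ref{pb}. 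The paper's detour through $\Ga_0^2$ buys a reusable statement about product measures that is natural in its two-component framework; your version is more self-contained with respect to the one-component harmonic analysis. Both arguments end with the same appeal to \eqref{convofmeasGa} with $F=KG$, whose $\widehat{\mu}$-integrability again follows from Proposition~\ref{pb}.
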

\begin{proof}
Let $G\in\Bbs$, then, obviously, $\widetilde{G}\in\BBs\subset L^1\bigl(\Ga_0^2,d\widehat{\rho}\bigr)$.
Let $F=KG$. Then, for any $(\ga^+,\ga^-)\in\widetilde{\Ga}^2$ (i.e., $\ga^+\cap\ga^-=\emptyset$), we obtain
\begin{align}
\widetilde{F}(\ga^+,\ga^-)&=F(\ga^+\cup\ga^-)=\sum_{\eta\Subset\ga^+\cup\ga^-}G(\eta)\notag\\
&=\sum_{\eta^+\Subset\ga^+}\sum_{\eta^-\Subset\ga^-}G(\eta^+\cup\eta^-)
=\sum_{\eta^+\Subset\ga^+}\sum_{\eta^-\Subset\ga^-}\widetilde{G}(\eta^+,\eta^-)=\bigl(\KK\widetilde{G}\bigr)(\ga^+,\ga^-).\label{ex::1}
\end{align}
We prove now that $\widehat{\rho}=\rho_1\otimes\rho_2\in\MLf$ is the correlation measure for $\widehat{\mu}=\mu_1\otimes\mu_2\in\MFm$. To do this, let us check \eqref{cormeas2} with $\mu=\widehat{\mu}$, $\rho_\mu=\widehat{\rho}$. Namely, for any $A=A^+\times A^-\in\BBg$ with $A^\pm\in\Bbg$, one has
\begin{align*}
  &\int_{\Ga^2}(\KK\1_A)(\ga^+,\ga^-)\,d\widehat{\mu}(\ga^+,\ga^-)=\int_{\Ga^2}\sum_{\eta^+\Subset\ga^+}\sum_{\eta^-\Subset\ga^-} \1_A (\eta^+,\eta^-)\,d\mu_1(\ga^+)\,d\mu_2(\ga^-)\\
  =&\int_{\Ga^+}\sum_{\eta^+\Subset\ga^+} \1_{A^+} (\eta^+)\,d\mu_1(\ga^+)
  \int_{\Ga^-}\sum_{\eta^-\Subset\ga^-} \1_{A^-} (\eta^-)\,d\mu_2(\ga^-)=\rho_1(A^+)\rho_2(A^-)=\widehat{\rho}(A).
\end{align*}
Therefore, by~\eqref{cormeas2}, the measure $\widehat{\rho}$ coincides with the correlation measure for $\widehat{\mu}$, at least on all sets of the form $A=A^+\times A^-\in\BBg$ with~$A^\pm\in\Bbg$.
Hence, these measures are coincided on the whole class of sets $\BBg$. Since $\mu_i\in\Mpi$, $i=1,2$, the measure $\widehat{\mu}$ is locallz absolutelz continuous w.r.t. $\pi_z\otimes\pi_z$, $z>0$. Then, by Proposition~\ref{goodset}, $\widehat{\mu}(\widetilde{\Ga}^2)=1$. As a result,
\begin{align*}
\int_{\Ga_0} G(\eta)d\rho(\eta)&=\int_{\Ga_0^+}
\int_{\Ga_0^-}\widetilde{G}(\eta^{+},\eta^-)
\,d\widehat{\rho}(\eta^+,\eta^-)=\int_{\Ga^+}\int_{\Ga^-}\bigl(\KK\widetilde{G}\bigr)(\ga^{+},\ga^-)
\,d\widehat{\mu}(\ga^+,\ga^-)\\&=\iint_{\widetilde{\Ga}^2}
\bigl(\KK\widetilde{G}\bigr)(\ga^{+},\ga^-)\,d\widehat{\mu}(\ga^+,\ga^-)=\iint_{\widetilde{\Ga}^2}\widetilde{F}(\ga^{+},\ga^-)
\,d\widehat{\mu}(\ga^+,\ga^-)
\\&=\int_{\Ga^+} \int_{\Ga^-}\widetilde{F}(\ga^{+},\ga^-)
\,d\mu_1(\ga^+)\,d\mu_2(\ga^-)=\int_\Ga F(\ga)\,d\mu(\ga),
\end{align*}
which proves the statement.
\end{proof}

\begin{theorem}\label{critposdef}
Let functions $k_i:\Ga_0\to \R$, $i=1,2$ be measurable. Then the function $k(\eta)=(k_1\ast k_2)(\eta)$ is positive definite in the sense of Lenard on~$\Ga_0$ if only the function $\widehat{k}(\eta^+,\eta^-):=k_1(\eta^+)k_2(\eta^-)$ is positive definite in the sense of Lenard on~$\Ga_0^2$.
\end{theorem}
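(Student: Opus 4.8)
The plan is to reduce the one-type positive definiteness of $k_1\ast k_2$ to the two-type positive definiteness of $\widehat{k}$ through a single change-of-variables identity, so that each admissible test function on $\Ga$ produces an admissible test function on $\Ga^2$. I read positive definiteness of $\widehat{k}$ on $\Ga_0^2$ by analogy with the one-type definition: for every $G\in\BBs$ with $(\KK G)(\ga^+,\ga^-)\ge 0$ one has $\int_{\Ga_0^2}G\,\widehat{k}\,d\la^2\ge 0$, with $\la^2=\la\otimes\la$.

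First I would record the identity linking the two convolutions. Taking $H=G$, $G_1=k_1$, $G_2=k_2$ in \eqref{minlosid-ast} gives, for any $G\in\Bbs$,
\begin{equation*}
\int_{\Ga_0}G(\eta)(k_1\ast k_2)(\eta)\,d\la(\eta)=\int_{\Ga_0}\int_{\Ga_0}G(\eta^+\cup\eta^-)\,k_1(\eta^+)\,k_2(\eta^-)\,d\la(\eta^-)\,d\la(\eta^+)=\int_{\Ga_0^2}\widetilde{G}(\eta^+,\eta^-)\,\widehat{k}(\eta^+,\eta^-)\,d\la^2,
\end{equation*}
where $\widetilde{G}(\eta^+,\eta^-):=G(\eta^+\cup\eta^-)$; the equality holds whenever either side is defined, which is exactly the regime in which the Lenard condition is tested. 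I would also note that $G\in\Bbs$ forces $\widetilde{G}\in\BBs$, precisely as in the proof of Proposition~\ref{connection}.

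Next I would transport the nonnegativity of the $K$-transform. Setting $F=KG$, the computation \eqref{ex::1} shows that for disjoint pairs, $(\ga^+,\ga^-)\in\widetilde{\Ga}^2$, one has $(\KK\widetilde{G})(\ga^+,\ga^-)=F(\ga^+\cup\ga^-)=(KG)(\ga^+\cup\ga^-)$. Hence, if $G$ is admissible for $k_1\ast k_2$, i.e.\ $KG\ge 0$ on all of $\Ga$, then $\KK\widetilde{G}\ge 0$ on $\widetilde{\Ga}^2$, so that $\widetilde{G}$ is an admissible test function for $\widehat{k}$. Assuming $\widehat{k}$ is positive definite in the sense of Lenard, the last integral in the display is $\ge 0$, and therefore so is $\int_{\Ga_0}G\,(k_1\ast k_2)\,d\la$. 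As $G\in\Bbs$ with $KG\ge 0$ was arbitrary, $k_1\ast k_2$ is positive definite in the sense of Lenard, which is the assertion.

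The delicate point, and the one I would be most careful about, is that the equality $(\KK\widetilde{G})(\ga^+,\ga^-)=(KG)(\ga^+\cup\ga^-)$ holds only on the good set $\widetilde{\Ga}^2$ of disjoint pairs: on overlapping configurations the multiplicities in \eqref{defK2} produce extra weights (each common point is reached through three choices of the pair $\eta^+,\eta^-$), so $\KK\widetilde{G}$ need not inherit the sign of $KG$ there. This forces the positive definiteness of $\widehat{k}$ on $\Ga_0^2$ to be understood relative to $\widetilde{\Ga}^2$, which is harmless precisely because the two-type measures $\widehat{\mu}=\mu_1\otimes\mu_2$ are concentrated on $\widetilde{\Ga}^2$ by Proposition~\ref{goodset}, and because $\bigl\{(\eta^+,\eta^-)\mid\eta^+\cap\eta^-\neq\emptyset\bigr\}$ is a $\la^2$-null set, so only the values of $\widetilde{G}$ and $\widehat{k}$ on disjoint pairs enter the integral. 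I would not expect the opposite implication to come from this identity alone: the map $G\mapsto\widetilde{G}$ reaches only those two-type functions depending on the union $\eta^+\cup\eta^-$, whereas positive definiteness of $\widehat{k}$ is tested against the strictly larger class of all admissible $H\in\BBs$; recovering those would require representing $\widehat{k}$ by a genuine two-type measure rather than invoking the identity.
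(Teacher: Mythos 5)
Your argument is essentially the paper's own proof: the same integral identity obtained from \eqref{minlosid-ast}, the same transport of nonnegativity from $KG$ to $\KK\widetilde{G}$ via \eqref{ex::1}, and the same one-directional conclusion. The only difference is that you explicitly flag that $(\KK\widetilde{G})(\ga^+,\ga^-)=(KG)(\ga^+\cup\ga^-)$ holds only on disjoint pairs, whereas the paper's one-line assertion that $KG\ge 0$ implies $\KK\widetilde{G}\ge 0$ silently glosses over the overlapping configurations; your reading of the two-type Lenard condition relative to $\widetilde{\Ga}^2$ (a $\la^2$-conull set) is, if anything, the more careful treatment of the same step.
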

\begin{proof}
The equality \eqref{ex::1} yields that, if $G\in\Bbs$ and $KG\geq0$, then $\KK\widetilde{G}\geq0$. By \eqref{minlosid-ast}, one has
\[
\int_{\Ga_0}
G(\eta)(k_1\ast k_2)(\eta)d\la(\eta)=
\int_{\Ga_0^2}\widetilde{G}(\eta^+,\eta^-)\widehat{k}(\eta^+,\eta^-)d\la(\eta^+)d\la(\eta^-).
\]
Therefore, the positive definiteness of $\widehat{k}$ in the sense of Lenard on~$\Ga_0^2$ implies the positive definiteness of $k$ in the sense of Lenard on~$\Ga_0$.
\end{proof}

\subsection{Convolution of Gibbs measures}
Let $L_+^0(\Ga\times X)$ denote the class of all measurable nonnegative functions
$f:\Ga\times X \rightarrow \R_+$. Let a function $r\in L_+^0(\Ga\times X)$ be arbitrary and fixed. A measure $\mu\in{\mathcal{M}}_{\mathrm{fm}}^1(\Gamma)$ is said to be the Gibbs measure corresponding to the relative energy density (a.k.a. Papangelou intensity) $r$ iff, for any $h\in L_+^0(\Ga\times X)$, the following Georgii--Nguyen--Zessin identity holds true (cf. \cite{NZ1979}):
\begin{equation}  \label{Campbell}
\int_\Gamma\sum_{x\in\gamma}h(\gamma,x)d\mu(\gamma)=\int_\Gamma\int_{X}
h(\gamma\cup x)r(\gamma,x)dxd\mu(\gamma).
\end{equation}
Let $\M_\mathrm{fm}^1(\Ga;r)$ denote the class of all such measures. For properties of these measures and references see e.g. \cite{FK2005}. It should be underline that with a necessity \eqref{Campbell} yields
\begin{equation}\label{CCI}
    r(\gamma\cup y,x)r(\gamma,y)= r(\gamma\cup x,y)r(\gamma,x),
\end{equation}
for $\mu\times dx\times dy$-a.a. $(\ga,x,y)\in\Ga\times{X}\times{X}$.

\begin{proposition}\label{PropConvGibbs}
Let $\{r_1, r_2, r\}\subset L_+^0(\Ga\times X)$. Consider measures $\mu_i\in\M^1_\mathrm{fm}(\Ga; r_i)$, $i=1,2$, $\mu\in \M^1_\mathrm{fm}(\Ga,r)$. Let $\mu=\mu_1\ast\mu_2$. Then, for $\mu _{1}\times \mu _{2}\times dx$-a.a. $\left( \gamma ^{+},\gamma
^{-},x\right)\in\Ga\times\Ga\times X $, the following identity holds
\begin{equation}\label{additive}
r\left( \gamma ^{+}\cup \gamma ^{-},x\right) =r_{1}\left( \gamma
^{+},x\right) +r_{2}\left( \gamma ^{-},x\right) .
\end{equation}
\end{proposition}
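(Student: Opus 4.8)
The plan is to write the Georgii--Nguyen--Zessin identity \eqref{Campbell} for $\mu=\mu_1\ast\mu_2$ and to evaluate \emph{both} of its sides through the convolution formula \eqref{convofmeasGa}, so that everything is reduced to integrals against $\widehat\mu=\mu_1\otimes\mu_2$. Fix an arbitrary $h\in L_+^0(\Ga\times X)$. First I would apply \eqref{convofmeasGa} with $F(\ga)=\sum_{x\in\ga}h(\ga,x)$ to the left-hand side of \eqref{Campbell}, obtaining $\int_{\Ga^2}\sum_{x\in\ga^+\cup\ga^-}h(\ga^+\cup\ga^-,x)\,d\mu_1(\ga^+)\,d\mu_2(\ga^-)$. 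Being Gibbs measures in the sense of \eqref{Campbell}, $\mu_1$ and $\mu_2$ are locally absolutely continuous with respect to the Poisson measure, so Proposition~\ref{goodset} applies to $\widehat\mu$ and tells us that it charges only disjoint pairs; hence the inner sum splits $\widehat\mu$-a.s. as $\sum_{x\in\ga^+}+\sum_{x\in\ga^-}$.

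Next I would integrate the two parts separately. Freezing $\ga^-$ and applying \eqref{Campbell} for $\mu_1$ to the test function $\ga^+\mapsto h(\ga^+\cup\ga^-,x)$ turns the $\ga^+$-sum into $\int_{\Ga^+}\int_X h(\ga^+\cup x\cup\ga^-,x)\,r_1(\ga^+,x)\,dx\,d\mu_1(\ga^+)$, and symmetrically the $\ga^-$-sum produces a factor $r_2(\ga^-,x)$. Thus the left-hand side of \eqref{Campbell} equals
\[
\int_{\Ga^2}\int_X h(\ga^+\cup\ga^-\cup x,x)\bigl(r_1(\ga^+,x)+r_2(\ga^-,x)\bigr)\,dx\,d\mu_1(\ga^+)\,d\mu_2(\ga^-).
\]
On the other hand, applying \eqref{convofmeasGa} to the right-hand side of \eqref{Campbell} with $F(\ga)=\int_X h(\ga\cup x,x)r(\ga,x)\,dx$ gives the same expression but with $r_1(\ga^+,x)+r_2(\ga^-,x)$ replaced by $r(\ga^+\cup\ga^-,x)$. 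Equating the two yields, for every $h\in L_+^0(\Ga\times X)$,
\[
\int_{\Ga^2}\int_X h(\ga^+\cup\ga^-\cup x,x)\bigl(r_1(\ga^+,x)+r_2(\ga^-,x)-r(\ga^+\cup\ga^-,x)\bigr)\,dx\,d\mu_1(\ga^+)\,d\mu_2(\ga^-)=0.
\]

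From here one would like to conclude the pointwise identity \eqref{additive} by the arbitrariness of $h$, letting $h$ range over a measure-determining family and forcing the bracketed integrand to vanish $\mu_1\times\mu_2\times dx$-a.e. I expect precisely this last step to be the main obstacle and the point demanding genuine care. The difficulty is that the test functions actually produced are all of the special form $h(\ga^+\cup\ga^-\cup x,x)$, which, since $x\notin\ga^+\cup\ga^-$ a.e., depends on $\ga^+$ and $\ga^-$ only through their union $\ga^+\cup\ga^-$. A priori this pins down only the \emph{conditional average} of $r_1(\ga^+,x)+r_2(\ga^-,x)$ given the union, whereas \eqref{additive} asserts equality in the separate variables $\ga^+,\ga^-$. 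Bridging this gap requires an additional argument showing that $r_1(\ga^+,x)+r_2(\ga^-,x)$ is itself ($\widehat\mu$-a.s.) a function of $\ga^+\cup\ga^-$ alone; I would try to extract this by exploiting that $r_1$ does not depend on $\ga^-$ and $r_2$ does not depend on $\ga^+$, together with the consistency (cocycle) relation \eqref{CCI} satisfied by each of $r_1,r_2,r$. This reduction of the integral identity to a pointwise statement is the delicate core of the proof.
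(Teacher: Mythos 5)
Your argument is, step for step, the paper's own proof: apply \eqref{convofmeasGa} to the left-hand side of \eqref{Campbell} for $\mu$, split the sum over $x\in\ga^+\cup\ga^-$ into the two partial sums (legitimate because, by Proposition~\ref{goodset}, $\widehat{\mu}$ charges only disjoint pairs), apply the GNZ identity of $\mu_1$ in $\ga^+$ and of $\mu_2$ in $\ga^-$, and compare with what \eqref{convofmeasGa} gives for the right-hand side of \eqref{Campbell}. The resulting identity
\[
\int_{\Ga^2}\int_X h(\ga^+\cup\ga^-\cup x,x)\bigl(r_1(\ga^+,x)+r_2(\ga^-,x)-r(\ga^+\cup\ga^-,x)\bigr)\,dx\,d\mu_1(\ga^+)\,d\mu_2(\ga^-)=0
\]
is exactly where the paper also lands; it then concludes with the single sentence that ``comparison of the getting expressions shows that \eqref{additive} holds true.'' So you have not missed anything the paper supplies. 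The concern you raise about the last step is a real one: since every test function produced has the form $h(\ga^+\cup\ga^-\cup x,x)$, the identity only determines the conditional average of $r_1(\ga^+,x)+r_2(\ga^-,x)$ given $(\ga^+\cup\ga^-,x)$, and to reach the pointwise statement \eqref{additive} one must additionally show that $r_1(\ga^+,x)+r_2(\ga^-,x)$ is $\mu_1\times\mu_2\times dx$-a.s.\ measurable with respect to the union --- a property that fails for arbitrary pairs of Papangelou intensities (e.g.\ $r_1$ coming from a nontrivial pair interaction and $r_2\equiv z$ constant: given the union, $\ga^+$ is still random and $r_1(\ga^+,x)$ genuinely varies with the decomposition) and therefore must be extracted from the standing hypothesis that $\mu_1\ast\mu_2$ itself admits a Papangelou intensity $r$. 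Neither your sketch nor the published proof carries this reduction out; you are exactly as far as the paper, with the difference that you have correctly located the delicate point instead of asserting the conclusion.
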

\begin{proof}
By \eqref{convofmeasGa}, for any $h\in L_+^0(\Ga\times X)$, one has
\begin{align*}
& \quad \int_{\Gamma}\sum_{x\in \gamma}h\left( \gamma ,x\right) d\mu
\left(
\gamma \right) \\
& =\int_{\Gamma ^{+}}\int_{\Gamma ^{-}}\sum_{x\in \gamma ^{+}\cup \gamma
^{-}}h\left( \gamma ^{+}\cup \gamma ^{-},x\right) d\mu _{1}\left( \gamma
^{+}\right) d\mu _{2}\left( \gamma ^{-}\right) \\
& =\int_{\Gamma ^{+}}\int_{\Gamma ^{-}}\int_{{X}}h\left( \gamma
^{+}\cup x\cup \gamma ^{-},x\right) r_{1}\left( \gamma ^{+},x\right)
dxd\mu _{1}\left(
\gamma ^{+}\right) d\mu _{2}\left( \gamma ^{-}\right) \\
& \quad +\int_{\Gamma ^{+}}\int_{\Gamma ^{-}}\int_{{X}}h\left( \gamma
^{+}\cup \gamma ^{-}\cup x,x\right) r_{2}\left( \gamma ^{-},x\right)
dxd\mu
_{1}\left( \gamma ^{+}\right) d\mu _{2}\left( \gamma ^{-}\right) \\
& =\int_{\Gamma ^{+}}\int_{\Gamma ^{-}}\int_{{X}}h\left( \gamma
^{+}\cup x\cup \gamma ^{-},x\right) \left( r_{1}\left( \gamma
^{+},x\right) +r_{2}\left( \gamma ^{-},x\right) \right) dxd\mu
_{1}\left( \gamma ^{+}\right) d\mu _{2}\left( \gamma ^{-}\right) .
\end{align*}%
On the other hand,
\begin{align*}
& \quad \int_{\Gamma}\sum_{x\in \gamma}h\left( \gamma ,x\right) d\mu
\left(
\gamma \right) =\int_{\Gamma}\int_{{X}}h\left( \gamma \cup x,x\right) r\left(
\gamma
,x\right) dxd\mu \left( \gamma \right) \\
& =\int_{\Gamma ^{+}}\int_{\Gamma ^{-}}\int_{{X}}h\left( \gamma
^{+}\cup x\cup \gamma ^{-},x\right) r\left( \gamma ^{+}\cup \gamma
^{-},x\right) dxd\mu _{1}\left( \gamma ^{+}\right) d\mu _{2}\left(
\gamma ^{-}\right),
\end{align*}%
where we used \eqref{convofmeasGa} again.
Comparison of the getting expressions shows that \eqref{additive} holds true.
\end{proof}

\begin{remark}
It is easily seen from \eqref{additive} that, if only $\mu_i$ are Gibbs measures constructed by potentials $\Phi_i:\Ga_0\to\R$, $i=1,2$, i.e., $r_i(\ga,x)=\exp\Bigl\{-\sum_{\eta\Subset\ga}\Phi_i(\eta\cup x)\Bigr\}$, $i=1,2,$, then $\mu=\mu_1\ast\mu_2$ cannot be defined in a such way.
\end{remark}

\begin{corollary}
Let the condition~\ref{PropConvGibbs} holds. Let $\mu=\mu_1\ast\mu_2$. Then, for $\mu _{1}\times \mu _{2}\times dx\times dy$-a.a. $\left( \gamma ^{+}, \gamma
^{-}, x, y\right)\in\Ga\times\Ga\times X \times X$,
\begin{multline}
r_{1}\left( \gamma ^{+},x\right)r_{2}\left( \gamma ^{-},x\right)\left[ r_{1}\left( \gamma ^{+},x\right) r_{2}\left( \gamma
^{-},y\right) -r_{1}\left( \gamma ^{+},y\right) r_{2}\left( \gamma
^{-},x\right) \right] \\ \times \left[ r_{1}\left( \gamma ^{+}\cup
x,y\right) r_{2}\left( \gamma ^{-},y\right) -r_{2}\left( \gamma
^{-}\cup x,y\right) r_{1}\left( \gamma ^{+},y\right) \right] =0.\label{ness}
\end{multline}
\end{corollary}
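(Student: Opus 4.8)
The plan is to combine the additivity identity \eqref{additive} furnished by Proposition~\ref{PropConvGibbs} with the consistency condition \eqref{CCI} written separately for the two Papangelou intensities $r_1$ and $r_2$. The essential point is that \eqref{additive} must be applied not only to $\gamma^+\cup\gamma^-$ but also to the configurations obtained after inserting one extra point on a prescribed side. Concretely, I would first establish the following \emph{augmented} form of \eqref{additive}: for $\mu_1\times\mu_2\times dx\times dy$-a.a. $(\gamma^+,\gamma^-,x,y)$,
\begin{align*}
 r_1(\gamma^+,x)\,\bigl[\,r(\gamma^+\cup\gamma^-\cup x,y)-r_1(\gamma^+\cup x,y)-r_2(\gamma^-,y)\,\bigr]&=0,\\
 r_2(\gamma^-,x)\,\bigl[\,r(\gamma^+\cup\gamma^-\cup x,y)-r_1(\gamma^+,y)-r_2(\gamma^-\cup x,y)\,\bigr]&=0,
\end{align*}
together with the two analogous identities obtained by inserting $y$ in place of $x$ (with $r(\cdot,x)$ and $r(\cdot,y)$ interchanged). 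The justification is a Palm-type transport: \eqref{additive} holds $\mu_1\otimes\mu_2\otimes dx$-a.e., and by the Georgii--Nguyen--Zessin identity \eqref{Campbell} (taken with a test function not depending on the summation point) the push-forward of $r_1(\gamma^+,x)\,d\mu_1(\gamma^+)\,dx$ under $(\gamma^+,x)\mapsto\gamma^+\cup x$ is absolutely continuous with respect to $\mu_1$; hence the a.e.\ identity \eqref{additive} persists after $\gamma^+$ is replaced by $\gamma^+\cup x$, and symmetrically for the other side.

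It is convenient to abbreviate $a=r_1(\gamma^+,x)$, $b=r_2(\gamma^-,x)$, $c=r_1(\gamma^+,y)$, $d=r_2(\gamma^-,y)$, $a'=r_1(\gamma^+\cup x,y)$, $b'=r_2(\gamma^-\cup x,y)$. The two displayed identities say that on $\{a>0\}$ one has $r(\gamma^+\cup\gamma^-\cup x,y)=a'+d$, while on $\{b>0\}$ one has $r(\gamma^+\cup\gamma^-\cup x,y)=c+b'$, so on $\{a>0,\ b>0\}$ the single-valuedness of $r$ forces $a'-c=b'-d$. Applying the same reasoning to the insertion of $y$ gives two expressions for $r(\gamma^+\cup\gamma^-\cup y,x)$, namely $r_1(\gamma^+\cup y,x)+b$ on $\{c>0\}$ and $a+r_2(\gamma^-\cup y,x)$ on $\{d>0\}$; eliminating $r_1(\gamma^+\cup y,x)$ and $r_2(\gamma^-\cup y,x)$ through \eqref{CCI} for $r_1$ and $r_2$ (which read $r_1(\gamma^+\cup y,x)\,c=a'a$ and $r_2(\gamma^-\cup y,x)\,d=b'b$) yields, on $\{c>0,\ d>0\}$, the relation $ad\,(a'-c)=bc\,(b'-d)$.

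It then remains to combine the two relations. On the set where $a,b,c,d$ are all positive, substituting $a'-c=b'-d$ into $ad(a'-c)=bc(b'-d)$ gives $(ad-bc)(a'-c)=0$; hence either $ad=bc$, which kills the middle bracket $ad-bc$ of \eqref{ness}, or $a'=c$ and (by $a'-c=b'-d$) $b'=d$, which makes the last bracket $a'd-b'c=cd-dc$ vanish. The remaining cases are immediate: if $a=0$ or $b=0$ the prefactor $ab$ vanishes, while if $c=0$ (resp.\ $d=0$) the consistency condition \eqref{CCI} for $r_1$ (resp.\ $r_2$) forces $a'=0$ (resp.\ $b'=0$), so that $a'd-b'c=0$. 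In every case the product in \eqref{ness} is $0$, which is the assertion.

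The step I expect to be the main obstacle is the first one: rigorously transporting the almost-everywhere identity \eqref{additive} to the augmented configurations $\gamma^+\cup x$ and $\gamma^-\cup x$, i.e.\ verifying that inserting a Lebesgue point on a prescribed side keeps one inside the full-measure set on which \eqref{additive} holds. This is exactly what encodes the ``which side does the extra point join'' ambiguity and is responsible for the nonlinear, product form of \eqref{ness}; once the four augmented identities are secured, the rest is elementary algebra with the consistency conditions for $r_1$ and $r_2$.
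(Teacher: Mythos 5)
Your proof is correct, and it reaches \eqref{ness} by a route that differs in execution from the paper's, although both rest on the same underlying idea that an inserted point can be attributed to either component and the resulting expressions must agree. The paper never isolates the linear relation $r_1(\ga^+\cup x,y)-r_1(\ga^+,y)=r_2(\ga^-\cup x,y)-r_2(\ga^-,y)$ that you extract from the two expansions of $r(\ga^+\cup\ga^-\cup x,y)$; instead it symmetrizes the product $r(\ga^+\cup\ga^-\cup y,x)\,r(\ga^+\cup\ga^-,y)$ in $(x,y)$ --- that is, it invokes the cocycle condition \eqref{CCI} for the convolution's own intensity $r$ --- expands that product in two ways according to which side receives $y$, obtains the quadratic identities \eqref{spec1}, \eqref{spec2} and their consequence \eqref{spec3}, and then multiplies \eqref{spec3} by $r_1(\ga^+,y)r_1(\ga^+,x)r_2(\ga^-,x)r_2(\ga^-,y)$ before applying \eqref{CCI} for $r_1$ and $r_2$ to factor out \eqref{ness}; this keeps the computation division-free, so no case analysis on vanishing intensities is required. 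Your version dispenses with \eqref{CCI} for $r$ altogether, replaces the symmetrization by the single-valuedness of $r$ at the augmented configuration (giving $a'-c=b'-d$ directly), and pays for the divisions by $c$ and $d$ with an explicit, and correct, treatment of the degenerate cases. You also make explicit the measure-theoretic transport of the a.e.\ identity \eqref{additive} to the augmented configurations via the Georgii--Nguyen--Zessin identity; the paper uses exactly the same augmented identities (e.g.\ $r(\ga^+\cup\ga^-\cup y,x)=r_1(\ga^+,x)+r_2(\ga^-\cup y,x)$) but leaves that step tacit, so your Palm-type justification is a genuine strengthening of the argument rather than a detour. The only loose phrasing is the claim that $c=0$ forces $a'=0$: the cocycle condition only gives $a'a=0$, but since the case $a=0$ is already absorbed by the prefactor $ab$, your conclusion stands.
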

\begin{proof}
By \eqref{CCI} (see also \cite{FK2005}), the expression
\begin{align*}
 r\left( \gamma ^{+}\cup \gamma ^{-}\cup y,x\right) r\left( \gamma ^{+}\cup
\gamma ^{-},y\right)
& =\left( r_{1}\left( \gamma ^{+},x\right) +r_{2}\left( \gamma ^{-}\cup
y,x\right) \right) \left( r_{1}\left( \gamma ^{+},y\right) +r_{2}\left(
\gamma ^{-},y\right) \right) \\
& =r_{1}\left( \gamma ^{+},x\right) r_{1}\left( \gamma ^{+},y\right)
+r_{1}\left( \gamma ^{+},x\right) r_{2}\left( \gamma ^{-},y\right) \\
& \quad +r_{2}\left( \gamma ^{-}\cup y,x\right) r_{1}\left( \gamma
^{+},y\right) +r_{2}\left( \gamma ^{-}\cup y,x\right) r_{2}\left( \gamma
^{-},y\right)
\end{align*}%
is a symmetric function of the variables $x$ and $y$, for $\mu _{1}\times \mu _{2}$-a.a. $\left(\gamma ^{+},\gamma ^{-}\right) $ and a.a. $x,y$.
But the expression $r_{2}\left( \gamma ^{-}\cup y,x\right) r_{2}\left( \gamma^{-},y\right)$ is also symmetric, therefore,
\begin{multline*}
r_{1}\left( \gamma ^{+},x\right) r_{2}\left( \gamma ^{-},y\right)
+r_{2}\left( \gamma ^{-}\cup y,x\right) r_{1}\left( \gamma
^{+},y\right) \\=r_{1}\left( \gamma ^{+},y\right) r_{2}\left( \gamma
^{-},x\right) +r_{2}\left( \gamma ^{-}\cup x,y\right) r_{1}\left(
\gamma ^{+},x\right) ,
\end{multline*}%
which yields
\begin{multline}
r_{1}\left( \gamma ^{+},x\right) r_{2}\left( \gamma ^{-},y\right)
-r_{1}\left( \gamma ^{+},y\right) r_{2}\left( \gamma ^{-},x\right)
\\=r_{2}\left( \gamma ^{-}\cup x,y\right) r_{1}\left( \gamma
^{+},x\right) -r_{2}\left( \gamma ^{-}\cup y,x\right) r_{1}\left(
\gamma ^{+},y\right) .\label{spec1}
\end{multline}%
On the other hand,
\begin{align*}
 r\left( \gamma ^{+}\cup \gamma ^{-}\cup y,x\right) r\left( \gamma ^{+}\cup
\gamma ^{-},y\right)
& =\left( r_{1}\left( \gamma ^{+}\cup y,x\right) +r_{2}\left( \gamma
^{-},x\right) \right) \left( r_{1}\left( \gamma ^{+},y\right) +r_{2}\left(
\gamma ^{-},y\right) \right) \\
& =r_{1}\left( \gamma ^{+}\cup y,x\right) r_{1}\left( \gamma ^{+},y\right)
+r_{1}\left( \gamma ^{+}\cup y,x\right) r_{2}\left( \gamma ^{-},y\right) \\
& \quad +r_{2}\left( \gamma ^{-},x\right) r_{1}\left( \gamma ^{+},y\right)
+r_{2}\left( \gamma ^{-},x\right) r_{2}\left( \gamma ^{-},y\right) .
\end{align*}%
Then, in the same way as above, one get
\begin{multline*}
r_{2}\left( \gamma ^{-},x\right) r_{1}\left( \gamma ^{+},y\right)
+r_{1}\left( \gamma ^{+}\cup y,x\right) r_{2}\left( \gamma ^{-},y\right) \\
=r_{2}\left( \gamma ^{-},y\right) r_{1}\left( \gamma ^{+},x\right)
+r_{1}\left( \gamma ^{+}\cup x,y\right) r_{2}\left( \gamma ^{-},x\right).
\end{multline*}%
Therefore,
\begin{multline}
r_{1}\left( \gamma ^{+},x\right) r_{2}\left( \gamma ^{-},y\right)
-r_{1}\left( \gamma ^{+},y\right) r_{2}\left( \gamma ^{-},x\right)
\\=r_{1}\left( \gamma ^{+}\cup y,x\right) r_{2}\left( \gamma
^{-},y\right) -r_{1}\left( \gamma ^{+}\cup x,y\right) r_{2}\left(
\gamma ^{-},x\right) .\label{spec2}
\end{multline}
Comparison of the right hand sides of \eqref{spec1} and \eqref{spec2} shows that
\begin{multline}
r_{1}\left( \gamma ^{+}\cup y,x\right) r_{2}\left( \gamma ^{-},y\right)
-r_{1}\left( \gamma ^{+}\cup x,y\right) r_{2}\left( \gamma ^{-},x\right)
\\
=r_{2}\left( \gamma ^{-}\cup x,y\right) r_{1}\left( \gamma ^{+},x\right)
-r_{2}\left( \gamma ^{-}\cup y,x\right) r_{1}\left( \gamma ^{+},y\right) .\label{spec3}
\end{multline}%
Let us multiply now the both sides of the equality \eqref{spec3} on $r_{1}\left( \gamma ^{+},y\right)
r_{1}\left( \gamma ^{+},x\right) r_{2}\left( \gamma ^{-},x\right)
r_{2}\left( \gamma ^{-},y\right) $. We obtain
\begin{align*}
& r_{1}\left( \gamma ^{+},y\right) r_{1}\left( \gamma ^{+},x\right)
r_{2}\left( \gamma ^{-},x\right) r_{2}\left( \gamma ^{-},y\right)
r_{1}\left( \gamma ^{+}\cup y,x\right) r_{2}\left( \gamma ^{-},y\right)  \\
& \quad -r_{1}\left( \gamma ^{+},y\right) r_{1}\left( \gamma ^{+},x\right)
r_{2}\left( \gamma ^{-},x\right) r_{2}\left( \gamma ^{-},y\right)
r_{1}\left( \gamma ^{+}\cup x,y\right) r_{2}\left( \gamma ^{-},x\right)  \\
& =r_{1}\left( \gamma ^{+},y\right) r_{1}\left( \gamma ^{+},x\right)
r_{2}\left( \gamma ^{-},x\right) r_{2}\left( \gamma ^{-},y\right)
r_{2}\left( \gamma ^{-}\cup x,y\right) r_{1}\left( \gamma ^{+},x\right)  \\
& \quad -r_{1}\left( \gamma ^{+},y\right) r_{1}\left( \gamma ^{+},x\right)
r_{2}\left( \gamma ^{-},x\right) r_{2}\left( \gamma ^{-},y\right)
r_{2}\left( \gamma ^{-}\cup y,x\right) r_{1}\left( \gamma ^{+},y\right) .
\end{align*}%
Therefore,
\begin{align*}
& r_{1}\left( \gamma ^{+}\cup x,y\right) r_{1}\left( \gamma ^{+},x\right)
r_{2}\left( \gamma ^{-},y\right) r_{2}\left( \gamma ^{-},x\right)  \left[ r_{1}\left( \gamma ^{+},x\right) r_{2}\left( \gamma
^{-},y\right) -r_{1}\left( \gamma ^{+},y\right) r_{2}\left( \gamma
^{-},x\right) \right]  \\
& =r_{2}\left( \gamma ^{-}\cup x,y\right) r_{2}\left( \gamma ^{-},x\right)
r_{1}\left( \gamma ^{+},y\right) r_{1}\left( \gamma ^{+},x\right)   \left[ r_{2}\left( \gamma ^{-},y\right) r_{1}\left( \gamma
^{+},x\right) -r_{2}\left( \gamma ^{-},x\right) r_{1}\left( \gamma
^{+},y\right) \right],
\end{align*}
which yields \eqref{ness}.
\end{proof}

The equality \eqref{ness} gives a `hint' about a proper class of relative energy densities $r_i(\ga,x)$ which satisfy \eqref{additive}. One may, for example, consider densities such that $r_i(\ga,x)=r_i(\ga)$, for $\mu_i\times dx$-a.a. $(\ga,x)\in\Ga\times{X}$. Then the expression in the first brackets in \eqref{ness} will be equal to $0$. Another useful variant is $r_i(\ga\cup y,x)=r_i(\ga,x)$, for $\mu_i\times dx\times dy$-a.a. $(\ga,x,y)\in\Ga\times{X}\times{X}$. Then the expression in the second brackets in \eqref{ness} will be equal to $0$.

\begin{example}
The so-called mixed Poisson measure corresponds to the both cases above. Namely, let
$p:(0;+\infty)\to(0;+\infty)$ with $\int_0^{\infty}p(z)dz=1$ and $p$ be continuous on $(0;+\infty)$. Consider the measure $\nu\in\M^1(\Ga)$ given by $\nu(A)=\int_0^\infty \pi_z(A) p(z)dz$, $A\in\B(\Ga)$. This is a mixture of Poisson measures with different intensities. By \eqref{MeckeID}, one has
\begin{align*}
\int_\Gamma\sum_{x\in\gamma}h(\gamma,x)d\nu(\gamma)&=
\int_0^\infty\int_\Gamma\sum_{x\in\gamma}h(\gamma,x)d\pi_z(\gamma)p(z)dz\\
&=\int_0^\infty z \int_\Gamma\int_{X} h(\gamma\cup x,x)dm(x) d\pi_z(\gamma) p(z)dz\\
&=\int_\Gamma\int_{X} h(\gamma\cup x,x)q(\ga,x)dm(x)  d\nu(\gamma).
\end{align*}
Here, for a.a. $z\in(0;+\infty)$, we consider $q(\ga,x)=z$, for $\pi_z$-a.a. $\ga\in\Ga$ and for all $x\in{X}$. More precisely, as it was noted before in Remark~\ref{remPi}, $\pi_{z_1}\bot\pi_{z_2}$ for $z_1\neq z_2$. On the other hand, by e.g. \cite{FU1998},
\begin{equation}\label{sf}
    \lim_{\La\uparrow{X}}\frac{|\ga\cap\La|}{m(\La)}=z \quad \text{for $\pi_z$-a.a. $\ga\in\Ga$, $z>0$.}
\end{equation}
Therefore, if $A_z$ is the set of configurations which satisfy \eqref{sf}, then $\pi_{z_1}(A_{z_2})=\delta_{z_1,z_2}$ (the Kronecker symbol). As a result, $\nu(A)=1$, where $A=\bigcup_{z>0}A_z$. Hence,
\begin{equation}\label{ada}
    q(\ga,x)=\lim_{\La\uparrow{X}}\frac{|\ga\cap\La|}{m(\La)} \quad \text{for $\nu$-a.a. $\ga\in\Ga$ and for all $x\in{X}$.}
\end{equation}
Stress that $q$ does not depend on $p$, i.e., the function $q$ does not define uniquely the measure $\nu$. Clearly, $q(\ga,x)=q(\ga\cup\eta,x)$, for all $\eta\in\Ga_0$, $\ga\cap\eta=\emptyset$. Similarly, $q(\ga_1\cup\ga_2,x)=q(\ga_1,x)+q(\ga_2,x)$, if only $\ga_1\cap\ga_2=\emptyset$. Therefore, if $\mu$, $\mu_1$, $\mu_2$ are mixed Poisson measures given by functions $p,p_1,p_2$, correspondingly, then the equality \eqref{additive} will holds true since $r_1=r_2=r=q$. To prove that a convolution of mixed Poisson measures is also a mixed Poisson measure we recall that (see e.g. \cite{AKR1998a}) any Poisson measure is uniquely defined by its values on sets $C(\La,n)=\bigl\{\ga\in\Ga\bigm||\ga\cap\La|=n\bigr\}$, $\La\in\Bb$, $n\in\N_0$ and, moreover,
\[
\pi_z(C(\La,n))=\frac{(zm(\La))^n}{n!}e^{-zm(\La)}.
\]
Then, by the definition of the convolution of measures, for $\mu=\mu_1\ast\mu_2$, one has
\begin{align*}
&\quad\int_{\Gamma }\1_{C\left(
\Lambda ,n\right) }\left( \gamma \right) d\mu \left( \gamma \right)
=\int_{\Gamma }\int_{\Gamma }\1_{C\left( \Lambda ,n\right) }\left( \gamma
_{1}\cup \gamma _{2}\right) d\mu _{1}\left( \gamma _{1}\right) d\mu
_{2}\left( \gamma _{2}\right)  \\
&=\int_{\Gamma }\int_{\Gamma }\1_
{\left\vert \gamma _{1}\cap \Lambda
\right\vert +\left\vert \gamma _{2}\cap \Lambda \right\vert =n}d\mu
_{1}\left( \gamma _{1}\right) d\mu _{2}\left( \gamma _{2}\right)  =\int_{\Gamma }\int_{\Gamma }\sum_{k=0}^{n}\1_{\left\vert \gamma _{1}\cap
\Lambda \right\vert =k}\1_{\left\vert \gamma _{2}\cap \Lambda \right\vert
=n-k}d\mu _{1}\left( \gamma _{1}\right) d\mu _{2}\left( \gamma _{2}\right)
\\
&=\sum_{k=0}^{n}\int_{\Gamma }\1_{\left\vert \gamma _{1}\cap \Lambda
\right\vert =k}d\mu _{1}\left( \gamma _{1}\right) \int_{\Gamma
}\1_{\left\vert \gamma _{2}\cap \Lambda \right\vert =n-k}d\mu _{2}\left(
\gamma _{2}\right)  \\
&=\sum_{k=0}^{n}\frac{1}{k!}\frac{1}{\left( n-k\right) !}\int_{0}^{\infty
}\left( zm\left( \Lambda \right) \right) ^{k}e^{-zm\left( \Lambda \right)
}p_{1}\left( z\right) dz\int_{0}^{\infty }\left( zm\left( \Lambda \right)
\right) ^{n-k}e^{-zm\left( \Lambda \right) }p_{2}\left( z\right) dz \\
&=\frac{\left( m\left( \Lambda \right) \right) ^{n}}{n!}\sum_{k=0}^{n}\frac{%
n!}{k!\left( n-k\right) !}\int_{0}^{\infty }\int_{0}^{\infty
}z_{1}^{k}z_{2}^{n-k}e^{-\left( z_{1}+z_{2}\right) m\left( \Lambda \right)
}p_{1}\left( z_{1}\right) dz_{1}p_{2}\left( z_{2}\right) dz_{2} \\
&=\frac{\left( m\left( \Lambda \right) \right) ^{n}}{n!}\int_{0}^{\infty
}\int_{0}^{\infty }\left( z_{1}+z_{2}\right) ^{n}e^{-\left(
z_{1}+z_{2}\right) m\left( \Lambda \right) }p_{1}\left( z_{1}\right)
p_{2}\left( z_{2}\right) dz_{1}dz_{2} \\
&=\int_{0}^{\infty }e^{-zm\left( \Lambda \right) }\frac{(zm(\La))^{n}}{n!}%
\int_{0}^{\infty }p_{1}\left( z_{1}\right) p_{2}\left( z-z_{1}\right) dz_{1}dz.
\end{align*}%
Hence, $\mu$ is the Poisson measure given by the function $p(z)=\int_{0}^{\infty }p_{1}\left( z_{1}\right) p_{2}\left( z-z_{1}\right) dz_{1}$, i.e., $p=p_1*p_2$ in the sense of the usual convolution on the real line.

To summarize, a convolution of two mixed Poisson measures is a mixed Poisson measure, these measure are Gibssian in the sense of \eqref{Campbell}, and their their relative energies densities are defined by \eqref{ada}.
\end{example}

\section{Invariant measures and examples of derivative operators w.r.t. the $*$-convolutions of functions on $\Ga_0$}

Recall that a measure $\mu\in\Mfm$ is said to be invariant for an operator $L$ which is defined on a class of functions on $\Gamma$ if for any such a $F$ the following equality holds true
\begin{equation}\label{inv}
\int_\Gamma (LF)(\gamma)d\mu(\gamma)=0.
\end{equation}
 Suppose that, for any $F\in\Fcyl$, $|LF(\eta)|<\infty$, at least for all $\eta\in\Ga_0$. Then, for any $G\in\Bbs$, we get $F=KG\in\Fcyl$ and the expression $K^{-1}LF$, given by analogy with \eqref{k-1trans}, is point-wise defined (for details see Section~3). As a result, one can consider the operator
\[
\hat{L}G:=K^{-1}LKG, \quad G\in\Bbs.
\]

Suppose that the integral in the left hand side of \eqref{inv} is finite for any $F\in\Fcyl$ and, additionally, that the correlation functional $k_\mu$ of the measure $\mu$ exists. Then, by \eqref{GintEqKGint-cf}, one has
\[
\langle\!\langle \hat{L}G, k_\mu\rangle\!\rangle=\int_{\Ga_0} (\hat{L}G)(\eta)k_\mu(\eta)\,d\la(\eta)=0,
\]
for all $G\in\Bbs$ (see also \cite[Section 4]{Fin2012a}). As a result, one can consider the equation for the dual operator
\begin{equation}  \label{invcorfunc}
\widehat{L}^\ast k=0,
\end{equation}
which might be considered either in a weak sense or e.g. in the space $\K_{C,\delta}$ (for details see also \cite{FKK2011a}). On the other hand, the equation \eqref{inv} does not define a measure uniquely. Indeed, let $\mu_i$, $i=1,2$ be invariant measures w.r.t. an operator $L$ (in particular, $\mu_1=\mu_2$), let $k_{1,2}$ be the corresponding correlation functionals.
By Proposition~\ref{connection} and \cite[Proposition 5.3]{Fin2012a}, the function
$k=k_1\ast k_2$ is the correlation functional of
$\mu:=\mu_1\ast\mu_2$. Suppose now that the operator
$\widehat{L}^\ast$ is a derivative operator w.r.t. the convolution \eqref{ast-q} (see \cite[Seubsection~5.3]{Fin2012a}). Then
\begin{equation*}
\widehat{L}^\ast k=(\widehat{L}^\ast k_1)\ast
k_2+k_1\ast(\widehat{L}^\ast k_2) =0,
\end{equation*}
therefore, $\mu=\mu_1\ast\mu_2$ is also an invariant measure for the operator $L$. In particular, $\mu_1^{*n}$ will be an invariant measure for the operator $L$, for all $n\in\N$.

\begin{example}
Consider the operator
\begin{equation}\label{CM-gen}
    (L_{\mathrm{CM}}F)(\ga)=\sum_{x\in\ga}\bigl[F(\ga\setminus x)-F(\ga)\bigl]+\sum_{y\in\ga}\int_{\R^d}a(x-y)\bigl[F(\ga\cup x)-F(\ga)\bigr]dx,
\end{equation}
where $0\leq a\in L^1(\R^d)$, $a(-x)=a(x)$, $x\in\R^d$. This is the generator of the so-called contact model introduced in \cite{KS2006}. In \cite{KKP2008}, it was shown that there exists a family of invariant measures $\mu_{\mathrm{inv}}$ which are parameterized by their first correlation functions $k_{\mathrm{inv}}^{(1)}=c$, $c>0$. Therefore, for each $c_{i}>0$, $i=1,2$, there exist two invariant measures $\mu_{1,2}$ for the operator \eqref{CM-gen}. Consider the measure $\mu=\mu_1\ast\mu_2$. By previous considerations, its the first correlation function is equal to $c_1+c_2$ and, moreover, this measure is invariant for the operator ${L}_{\mathrm{CM}}$ provided that the operator $\widehat{L}_{\mathrm{CM}}^*$ is a derivative operator w.r.t. the convolution \eqref{ast-q}. Below we prove the latter fact as a particular case of much more general situation.
\end{example}

Consider the following two operators
\begin{align*}
\left( L_{-}F\right) \left( \gamma \right)  &=\sum_{x\in \gamma
}\int_{\Gamma _{0}}d\left( x,\omega \right) \left[ F\left( \gamma \setminus
x\cup \omega \right) -F\left( \gamma \right) \right] d\lambda \left( \omega
\right) , \\
\left( L_{+}F\right) \left( \gamma \right)  &=\sum_{x\in \gamma
}\int_{\Gamma _{0}}b\left( x,\omega \right) \left[ F\left( \gamma \cup
\omega \right) -F\left( \gamma \right) \right] d\lambda \left( \omega
\right)
\end{align*}%
(here and subsequently, we write just $x$ instead of $\{x\}$).
Let functions $b$ and $d$ be measurable and nonnegative on ${X}\times\Ga_0$ and, additionally, suppose that $\int_{\Ga_\La}\bigl(b(x,\omega)+d(x,\omega)\bigr)d\la(\omega)<\infty$. Then $|LF(\eta)|<\infty$, $\eta\in\Ga_0$.
Let us denote
\begin{align*}
\bigl( L_{-}^{\left( n\right) }F\bigr) \left( \gamma \right)  &=\sum_{x\in \gamma }\int_{X^{n}}d\left( x,\left\{ y_{1},\ldots
,y_{n}\right\} \right) \left[ F\left( \gamma \setminus x\cup \left\{
y_{1},\ldots ,y_{n}\right\} \right) -F\left( \gamma \right) \right]
dy_{1}\ldots dy_{n}, \\
\bigl( L_{+}^{\left( n\right) }F\bigr) \left( \gamma \right)  &=\sum_{x\in \gamma }\int_{X^{n}}d\left( x,\left\{ y_{1},\ldots
,y_{n}\right\} \right) \left[ F\left( \gamma \cup \left\{ y_{1},\ldots
,y_{n}\right\} \right) -F\left( \gamma \right) \right] dy_{1}\ldots dy_{n},
\end{align*}%
i.e., $L_\pm=\sum_{n=0}^\infty \frac{1}{n!} L_\pm^{(n)}$.
For example,
\begin{align*}
\bigl( L_{-}^{\left( 0\right) }F\bigr) \left( \gamma \right)  &=\sum_{x\in
\gamma }d\left( x,\emptyset \right) \left[ F\left( \gamma \setminus x\right)
-F\left( \gamma \right) \right] , \\
\bigl( L_{-}^{\left( 1\right) }F\bigr) \left( \gamma \right)  &=\sum_{x\in
\gamma }\int_{X}d\left( x,y\right) \left[ F\left( \gamma \setminus x\cup
y\right) -F\left( \gamma \right) \right] dy, \\
\bigl( L_{+}^{\left( 0\right) }F\bigr) \left( \gamma \right)  &=0, \\
\bigl( L_{+}^{\left( 1\right) }F\bigr) \left( \gamma \right)  &=\sum_{x\in
\gamma }\int_{X}d\left( x,y\right) \left[ F\left( \gamma \cup y\right)
-F\left( \gamma \right) \right] dy.
\end{align*}%
In particular, if $d(x,\emptyset)\equiv=1$, $d(x,y)=a(x-y)$, then $L_\mathrm{CM}= L_{-}^{(0)}+L_{+}^{(1)}$.

We proceed to the calculation of $\hat{L}_\pm$. One has
\begin{align*}
\left( KG\right) \left( \gamma \setminus x\cup \omega \right) -\left(
KG\right) \left( \gamma \right)
&=\sum_{\eta \Subset \gamma \setminus x}\sum_{\emptyset \neq \zeta \subset
\omega }G\left( \eta \cup \zeta \right) -\sum_{\eta \Subset \gamma \setminus
x}G\left( \eta \cup x\right)  \\
&=K\biggl[ \,\sum_{\emptyset \neq \zeta \subset \omega }G\left( \cdot \cup
\zeta \right) -G\left( \cdot \cup x\right) \biggr] \left( \gamma \setminus
x\right) ,
\end{align*}%
then
\begin{align*}
&\quad(\hat{L}_{-}G)(\eta)=\left( K^{-1}\left( L_{-}KG\right) \right) \left( \eta \right)
\\&=\sum_{x\in \eta }\int_{\Gamma _{0}}d\left( x,\omega \right) \biggl[
\,\sum_{\emptyset \neq \zeta \subset \omega }G\left( \eta \setminus x\cup
\zeta \right) -G\left( \eta \right) \biggr] d\lambda \left( \omega \right)
\\
&=-\sum_{x\in \eta }d\left( x\right) G\left( \eta \right) -\sum_{x\in \eta
}d\left( x\right) G\left( \eta \setminus x\right) +\sum_{x\in \eta
}\int_{\Gamma _{0}}d\left( x,\omega \right) \sum_{\zeta \subset \omega
}G\left( \eta \setminus x\cup \zeta \right) d\lambda \left( \omega \right)
\\
&=-D\left( \eta \right) G\left( \eta \right) -\sum_{x\in \eta }d\left(
x\right) G\left( \eta \setminus x\right) +\sum_{x\in \eta }\int_{\Gamma
_{0}}\left( \int_{\Gamma _{0}}d\left( x,\omega \cup \xi \right) d\lambda
\left( \omega \right) \right) G\left( \eta \setminus x\cup \zeta \right)
d\lambda \left( \xi \right)  \\
&=-D\left( \eta \right) G\left( \eta \right) -\sum_{x\in \eta }d\left(
x\right) G\left( \eta \setminus x\right) +\sum_{x\in \eta }\int_{\Gamma
_{0}}d_{1}\left( x,\xi \right) G\left( \eta \setminus x\cup \zeta \right)
d\lambda \left( \xi \right) ,
\end{align*}%
where we denoted
\[
    d\left( x\right)  =\int_{\Gamma _{0}}d\left( x,\omega \right) d\lambda
    \left( \omega \right) , \quad D\left( \eta \right) =\sum_{x\in \eta }d\left(
    x\right) , \quad
    d_{1}\left( x,\xi \right)  =\int_{\Gamma _{0}}d\left( x,\omega \cup \xi
    \right) d\lambda \left( \omega \right) .
\]

Similarly, using the equality
\[
\left( KG\right) \left( \gamma \cup \omega \right) -\left( KG\right) \left(
\gamma \right) =\sum_{\eta \Subset \gamma }\sum_{\emptyset \neq \zeta
\subset \omega }G\left( \eta \cup \zeta \right) =K\biggl[ \,\sum_{\emptyset
\neq \zeta \subset \omega }G\left( \cdot \cup \zeta \right) \biggr] \left(
\gamma \right) ,
\]%
we derive
\begin{align*}
&\quad(\hat{L}_{+}G)(\eta)=\left( K^{-1}\left( L_{+}KG\right) \right) \left( \eta \right)\\
&=\sum_{x\in \eta }\int_{\Gamma _{0}}b\left( x,\omega \right)
\sum_{\emptyset \neq \zeta \subset \omega }G\left( \eta \setminus x\cup
\zeta \right) d\lambda \left( \omega \right) +\sum_{x\in \eta }\int_{\Gamma
_{0}}b\left( x,\omega \right) \sum_{\emptyset \neq \zeta \subset \omega
}G\left( \eta \cup \zeta \right) d\lambda \left( \omega \right)  \\
&=\sum_{x\in \eta }\int_{\Gamma _{0}}b\left( x_{1},\zeta \right) G\left(
\eta \setminus x\cup \zeta \right) d\lambda \left( \zeta \right) +\sum_{x\in
\eta }\int_{\Gamma _{0}}b_{1}\left( x,\zeta \right) G\left( \eta \cup \zeta
\right) d\lambda \left( \zeta \right)  \\
&\quad -\sum_{x\in \eta }G\left( \eta \setminus x\right) b\left( x\right)
-B\left( \eta \right) G\left( \eta \right)
\end{align*}%
where
\[
b\left( x\right)  =\int_{\Gamma _{0}}b\left( x,\omega \right) d\lambda
\left( \omega \right), \quad B\left( \eta \right) =\sum_{x\in \eta }b\left(
x\right) , \quad b_{1}\left( x,\xi \right)  =\int_{\Gamma _{0}}b\left( x,\omega \cup \xi
\right) d\lambda \left( \omega \right) .
\]

It is easily seen that the both expressions, which we obtained, are satisfied the equality
\begin{equation}  \label{dual-sum}
(\hat{L}_\pm G)(\eta\cup\xi) =\bigl((\hat{L}_\pm G)(\cdot\cup\xi)\bigr)(\eta)+\bigl((\hat{L}_\pm G)(\cdot\cup\eta)\bigr)(\xi).
\end{equation}
Therefore, by \cite[Proposition 5.8]{Fin2012a}, the corresponding operators $\hat{L}_\pm ^*$ will be derivative operators w.r.t. the convolution \eqref{ast-q}.

\def\cprime{$'$}

\end{document}